\numberwithin{equation}{section}
\numberwithin{equation}{section}
\let\eps\varepsilon
\def\be{\begin{equation}}
\def\ee{\end{equation}}
\let\ds\displaystyle
\let\pa\partial
\def\RR{\mathbb{R}}
\def\TT{\mathbb{T}}
\def\NN{\mathbb{N}}
\def\cC{\mathcal C}
\def\cH{\mathcal H}
\def\cI{\mathcal I}
\def\cM{\mathcal M}
\def\cQ{\mathcal Q}
\def\bfu{\mathbf{u}}
\def\bfv{\mathbf{v}}
\def\bfx{\mathbf{x}}
\def\bfq{\mathbf{q}}
\def\bfp{\mathbb{P}}
\def\bfE{\mathbf{E}}
\def\bfB{\mathbf{B}}
\def\bfU{\mathbf{U}}
\newtheorem{remark}{Remark}[section]
\newtheorem{prop}{Proposition}[section]
\newtheorem{theorem}{Theorem}[section]
\newcommand{\cla}[1]{\color{black}#1 \color{black}}
\def\pa{\partial}
\newcommand\dD{\mathrm{d}}
\newcommand\Div{\mathrm{div}}
\title[Multi-species Fokker-Planck equation]{Fokker-Planck multi-species equations in the adiabatic asymptotics}
\author[F. Filbet and C. Negulescu]{Francis Filbet and Claudia Negulescu}
\address{Universit\'e de Toulouse \& CNRS, UPS, Institut de Math\'ematiques de Toulouse UMR 5219, F-31062 Toulouse, France.}
\email{francis.filbet@math.univ-toulouse.fr, claudia.negulescu@math.univ-toulouse.fr}
\date{\today}
\begin{document}
\maketitle

\begin{abstract}
{The main concern of the present paper is the study of the multi-scale dynamics of thermonuclear fusion plasmas via a multi-species Fokker-Planck kinetic model. 
One of the goals is the generalization of the standard Fokker-Planck
collision operator to a multi-species one, conserving mass, total
momentum and energy, as well as satisfying Boltzmann's
$H$-theorem. Secondly, we perform on one hand a mathematical
asymptotic limit, letting the electron/ion mass ratio converging towards zero,  to obtain a thermodynamic
equilibrium state for the electrons (adiabatic regime), whereas the ions are kept
kinetic.   On the other hand, we develop a first numerical scheme,
based on a Hermite spectral method, and perform numerical simulations
to  investigate in more details this asymptotic limit.}
\end{abstract}

\bigskip

\keywords{{\bf Keywords:} Plasma modelling, Fokker-Planck kinetic equations, adiabatic electron regime, asymptotic analysis, entropy-methods, multi-scale numerical scheme.}

\maketitle

\tableofcontents
\section{Introduction}
\label{SEC1}
\setcounter{equation}{0}
\setcounter{figure}{0}
\setcounter{table}{0}

Starting with the first projects born in Russia in the early 1950's, continuous efforts were made to produce clean and reliable energy in tokamak fusion reactors able to confine a very hot plasma gas via strong electromagnetic fields. The mathematical modelling is a useful tool in this process.
Kinetic models, based on a mesoscopic description of the various
particles constituting a plasma, and coupled to Maxwell's equations
for the computation of the electromagnetic fields, are very precise
approaches for the study of such thermonuclear fusion
plasmas. However, treating each species in a kinetic framework is
computationally very demanding, such that approximate models
have been introduced. Especially when one is
interested in the investigation of phenomena occurring on the (slow)
ion scales, electrons are approximated via macroscopic models
(adiabatic models). The justification is that the time- and
length-scales associated with the electrons are very small as compared
to the ones of the ions (due to the small mass ratio $m_e/m_i$), such
that electrons are considered to be in a
thermodynamic equilibrium. Such hybrid strategies, treating ions in a fully kinetic manner and
electrons via fluid approaches, are often used in today's simulations
\cite{cartwright,PHYS2,garbet,kwok}, leading to significant savings in
computational time and memory. But, describing particles via a fluid
model requires that the electron distribution function remains close to a
thermodynamic equilibrium, meaning being close to a Maxwellian distribution
in the velocity variable. Coulomb collisions in a thermonuclear
plasma are however not sufficiently effective to thermalize the
electrons. Thus, the validity of the adiabatic electron model
(electron Boltzmann relation) is rather controversial. Indeed, this
model seems to break down in various situations, as for example in the
edge plasma region, or it does not take into account for
important instabilities, such as the Trapped Electron Modes (TEM), which
are considered as essential in the turbulent dynamics \cite{dom}.\\

In this paper, we shall especially  focus on the asymptotic towards the electron Boltzmann
regime, starting from a kinetic picture where
  collisions and collective effects (electrostatic forces) are well
  balanced. To investigate this dynamic,  we are firstly introducing a multi-species
Fokker-Planck equations, with particular emphasize on the
inter-species collision operators and their properties.
In the literature one can find various simplified models for inter-species
collisions. For instance BGK models for gas mixtures are given as  a
sum of relaxation operators. One example is the model of Klingenberg,
Pirner and Puppo \cite{KPP} or Bobylev, Bisi, Groppi,
Spiga and Potapenko \cite{Bob}. It contains the often used
models of Gross and Krook \cite{G73,GK} and Hamel \cite{H} as special
cases. Other type of models contain only one collision term on the
right-hand side as the one proposed by Andries, Aoki and Perthame in
\cite{AAP}. In this paper we focus on Fokker-Planck type operators which are
more consistent for the description of collisional plasmas \cite{Degond_rev, taitano,cruz,duclous}.  The model
is derived by introducing mixed temperatures and momenta, under the
constraint that the number of particles of each species, the total
momentum and the total energy are conserved. Moreover, we prove that the
model satisfies an H-Theorem, permitting to characterize the form of equilibrium.

Having introduced these Fokker-Planck collision operators, a physical scaling is performed permitting to characterize the regime of
interest in our plasma studies, namely focusing on the ion dynamics. This allows to identify a small parameter $\eps \ll 1$ which shall permit to obtain the desired electron Boltzmann relation, when performing a formal asymptotic limit $\eps \rightarrow 0$. 
{Our main goal is to design a numerical method
  able to give precise results for all  values of $\eps \in[0,1]$,
  especially able to follow the asymptotic limit $\eps \rightarrow 0$
  without  extensive numerical efforts. The idea is to have a scheme
  which can treat electrons and ions simultaneously without having to
  adapt the mesh to the different species, but rather to the physical
  phenomenon one wants to investigate. In this aim we shall present in
  this paper the first step towards such a performant scheme, based on
  a Hermite spectral approach to cope with the velocity variable \cite{Filbet2020, Filbet2022, SH}. Hermite spectral methods offer indeed an ideal way to perform
  large-scale simulations, including at the same time microscopic
  kinetic effects. The choice of a suitable scaling of the Hermite
  basis functions, adapted to the investigated asymptotic, is
  fundamental, rendering the Hermite approach intrinsically multiscale
  and providing thus a natural bridge between the microscopic and the
  macroscopic worlds. Indeed, in the limit $\eps\rightarrow 0$,  the
  distribution function can be represented by only one Hermite
  function, reducing drastically the number of discretization parameters in the
  velocity space.  
The use of Hermite functions for the resolution of kinetic equations
was proposed for the first time by Grad in \cite{Grad}. }

{At the end of this work numerical simulations are presented in the aim to show  the advantage of such a Hermite spectral approach, in particular to understand
how the electron distribution function converges after a transient
regime towards its thermodynamic limit in the context of plasma
simulations.} 
The specificity of this method will be underlined, namely the fact that it permits considerable  improvements in simulation time for small $\eps$-values, as in such regimes very few Hermite modes have to be taken into account.\\ 

The outline of this paper is the following. In Section \ref{SEC2} we
present the fully kinetic ion-electron model and its physical scaling
leading to a multi-scale multi-species coupled Fokker-Planck
model. Section \ref{SEC3} deals with the formal derivation of the
hybrid limit-model, when letting a small parameter $\eps \ll 1$
tend towards zero, parameter standing somehow for the small
electron-to-ion mass ratio. The well-posedness of the limit-model is
also investigated. And finally in Sections \ref{sec:4} and \ref{sec:5} we shall present a first numerical scheme, based on a Hermite spectral approach, and shall conclude with the study of some numerical simulations. 

\section{The mixed kinetic model and its scaling}
\label{SEC2}
\setcounter{equation}{0}
\setcounter{figure}{0}
\setcounter{table}{0}


\subsection{The mixed kinetic model}

The starting point of this work is the  following model composed of two coupled kinetic equations for the ions respectively  electrons of a fusion plasma, {\it i.e.}
\begin{equation}
\left\{
 \begin{array}{ll}
      \displaystyle \partial_t f_i \,+\, \bfv\cdot \nabla_\bfx f_i \,+\, \frac{e}{m_i}\, \bfE\cdot \nabla_\bfv f_i \,=\, \cQ_{ii}(f_i,f_i) \,+\, \cQ_{ie}(f_i,f_e)\,, \\[1.1em]
      
      \displaystyle \partial_t f_e \,+\, \bfv\cdot  \nabla_\bfx f_e \,-\, \frac{e}{m_e}\, \bfE \cdot \nabla_\bfv f_e \,=\, \cQ_{ee}(f_e,f_e) \,+\, \cQ_{ei}(f_e,f_i)\,,
    \end{array}
  \right.
\label{SystemKineticDimensional}  
\end{equation}
associated to Poisson's equation for the description of the electrostatic potential
\begin{equation}
- \Delta \phi \,=\, \frac{e}{\varepsilon_0}\,(n_i \,-\, n_e), \qquad \bfE \,=\, - \nabla_\bfx \phi\,,
\label{PoissonDim}    
\end{equation}
with $e$ the elementary charge, $m_{\alpha}$ the  mass of species $\alpha\in\{e,\,i\} $ and $\varepsilon_0$ the vacuum permitivity. The magnetic field $\bfB$ is not considered here, as we shall focus in the following rather on the dynamics  parallel to $\bfB$ and did not want to encumber the paper. For a more general framework see \cite{cla}.
The functions $f_{\alpha}(t,\bfx,\bfv)$ represent the particle distribution functions in the phase-space $(\bfx,\bfv)\in  {\TT^d} \times \mathbb{R}^d$ (${\TT}^d$ being the $d$-dimensional  torus) whereas the electron and ion macroscopic quantities are given for $\alpha\in\{e,\,i\}$ by
\begin{equation*}
  \left\{
\begin{array}{l}	
    \displaystyle n_{\alpha}(t,\bfx)  \,:=\,   \int_{\mathbb{R}^d} f_{\alpha}(t,\bfx,\bfv)\, \dD\bfv\,,  \\[1.1em]
    \displaystyle n_{\alpha} \bfu_{\alpha}(t,\bfx)  \,:= \,  \int_{\mathbb{R}^d} \bfv\, f_{\alpha}(t,\bfx,\bfv)\, \dD \bfv\,, \\[1.1em]
    \displaystyle  d\, k_B \;n_{\alpha}\; T_{\alpha}(t,\bfx)  \, := \,   m_{\alpha}\, \int_{\mathbb{R}^d} |\bfv-\bfu_{\alpha}|^2\, f_{\alpha}(t,\bfx,\bfv)\, \dD\bfv\,, \\[1.1em]
    \displaystyle  w_{\alpha}(t,\bfx)  \, := \,  {m_{\alpha} \over 2}\, \int_{\mathbb{R}^d} |\bfv|^2\, f_{\alpha}(t,\bfx,\bfv)\, \dD\bfv = \frac{d}{2}\, k_B n_{\alpha} T_{\alpha} + {m_{\alpha} \over 2} n_{\alpha} |\bfu_{\alpha}|^2\,,
\end{array}\right.
\end{equation*}
with $k_B$ the Boltzmann constant. The collision operators describing the  inter- and intra-species interactions are chosen of Fokker-Planck type, {\it i.e.} given for $\alpha$, $\beta \in \{e,i\}$ by
\begin{equation*}
\displaystyle
\cQ_{\alpha\beta}(f_\alpha,f_\beta) \,:=\, \nu_{\alpha \beta}\,\Div_\bfv\left( (\bfv - \bfu_{\alpha\beta}) f_\alpha + \frac{k_B T_{\alpha\beta}}{m_\alpha} \nabla_\bfv f_\alpha \right)\,,
\end{equation*}
where $\nu_{\alpha \beta}>0$ are the collisional frequencies corresponding to the couple $(\alpha,\beta)$ of particles.

{For intra-species collisions, we have $\bfu_{\alpha \alpha} := \bfu_\alpha$, $T_{\alpha \alpha }:= T_{\alpha}$ and the} collision operator $\cQ_{\alpha\alpha}(f_\alpha,f_\alpha)$ is chosen in such a way to get conservation of mass, momentum and kinetic energy
$$
m_\alpha\,\int_{\mathbb{R}^d} \cQ_{\alpha\alpha}(f_\alpha,f_\alpha)\,\left(
\begin{array}{l}
 \, \,1\\
  \,\,\bfv\\
\ds\frac{|\bfv|^2}{2}
\end{array}\right)\, \dD\bfv \,=\, 0 \,,\qquad \forall \alpha\in\{e,\,i\}\,,
$$
as well as the entropy decay
\begin{equation*}
         \int_{\mathbb{R}^d} \cQ_{\alpha\alpha}(f_\alpha,f_\alpha) \,\ln(f_\alpha) \,\dD\bfv\, \,\leq\, 0\,,\qquad \forall \alpha\in\{e,\,i\}\,,
\end{equation*}
leading to the thermal equilibrium
\begin{equation*}
  \int_{\mathbb{R}^d} \cQ_{\alpha\alpha}(f_{\alpha},f_{\alpha}) \ln(f_{\alpha})\, \dD\bfv \,=\, 0 \quad \Longleftrightarrow \quad f_{\alpha}\,=\,\cM_{n_{\alpha},\bfu_{\alpha},T_{\alpha}}\,,
\end{equation*}
where $\cM_{n_{\alpha},\bfu_{\alpha},T_{\alpha}}$ is the local Maxwellian defined by
\be
\label{localM}
\cM_{n_{\alpha},\bfu_{\alpha},T_{\alpha}}(\bfv) := n_\alpha\, \left( \frac{m_\alpha}{2 \pi\, k_B\, T_{\alpha}} \right)^{d/2}\, \exp\left(-m_\alpha \frac{|\bfv - \bfu_{\alpha}|^2}{2 k_B T_{\alpha}}\right).
\ee

However for the inter-species collisions the situation is more complex.  {The choice of the inter-species mixed velocities $\bfu_{\alpha \beta}$ and temperatures $T_{\alpha \beta}$ is done such that to enforce the appropriate conservation laws and to ensure the H-theorem. For this we shall first of all require that
\begin{equation}
\bfu_{\alpha \beta}=\bfu_{\beta \alpha}\,, \qquad T_{\alpha \beta}=T_{\beta \alpha}\,, \qquad \nu_{ei} m_e n_e = \nu_{ie} m_i n_i\,.
\label{relationCollisionFrequency}
\end{equation}
These three requirements are fundamental and also physical. The justification of the last assumption comes from the Coulomb collisional frequency  \cite{hazel}, given by
$$
\nu_{\alpha \beta}= C\, e_\alpha^2\, e_\beta^2\, n_\beta\, {m_\beta \over m_\alpha + m_\beta} \, {1 \over (v_{th,\alpha}^2+v_{th,\beta}^2)^{3/2}}\,, \quad C>0\,.
$$
With these assumptions and the fact that we would like to ensure the  total  momentum conservation
       \begin{equation*}
        m_e \int_{\mathbb{R}^d} \cQ_{ei}(f_e,f_i) \,\bfv\, \dD\bfv \,+\, m_i \int_{\mathbb{R}^d} \cQ_{ie}(f_i,f_e) \,\bfv \,\dD \bfv \,=\, 0\,,
    \end{equation*}
the total kinetic energy conservation
       \begin{equation*}
        m_e \int_{\mathbb{R}^d} \cQ_{ei}(f_e,f_i) \,{{|\bfv|^2}\over{2}} \,\dD\bfv \,+\, m_i \int_{\mathbb{R}^d} \cQ_{ie}(f_i,f_e) \,{{|\bfv|^2}\over{2}} \,\dD\bfv \,=\, 0\,,
    \end{equation*}
as well as the global entropy decay
       \begin{equation*}
         \int_{\mathbb{R}^d} \cQ_{ei}(f_e,f_i) \,\ln(f_e) \,\dD\bfv  \,+\, \int_{\mathbb{R}^d} \cQ_{ie}(f_i,f_e) \ln(f_i) \,\dD\bfv \,\leq\, 0\,,
    \end{equation*}
a unique choice of mixed velocities is possible, given by
\begin{equation}
 \bfu_{ei} \,=\, \bfu_{ie} \,:=\, \frac{\bfu_e + \bfu_i}{2}\,,
\label{mixed1}
\ee
as well as a unique choice of mixed temperatures, namely
\be
 T_{ei} \,=\, T_{ie} \,:=\, \frac{m_i\, T_e \,+\, m_e\, T_i}{m_e + m_i} \,+\, \frac{m_i \,m_e}{m_i \,+\, m_e} \frac{|\bfu_e - \bfu_i|^2}{2 d \,k_B}\,.
\label{mixed2}
\ee
To simplify the computations, let us remark that the collision operators} can be rewritten in a simpler form as follows
\begin{equation*}
\cQ_{\alpha \beta}(f_\alpha,f_\beta) \,=\, \nu_{\alpha \beta} \,\,\Div_\bfv\left( \frac{k_B T_{\alpha \beta}}{m_\alpha}\,\cM_{\alpha \beta}\, \nabla_\bfv \left( \frac{f_\alpha}{\cM_{\alpha \beta}} \right)  \right)\,,
\end{equation*}
with $\cM_{\alpha \beta}$ the local Maxwellian given by

\be \label{oufff}
\cM_{\alpha \beta}(t,\bfx,\bfv) := n_\alpha(t,\bfx)\, \left( \frac{m_\alpha}{2 \pi\, k_B\, T_{\alpha \beta}(t,\bfx)} \right)^{d/2}\, \exp\left(-m_\alpha \frac{|\bfv - \bfu_{\alpha \beta}(t,\bfx)|^2}{2 k_B T_{\alpha \beta}(t,\bfx)}\right).
\ee

Unlike Boltzmann's operators for neutral gases, the Fokker-Planck
operator expresses the cumulative effects of many grazing collisions
(rather than short-range collisions), and this is due to the long-range effect of the Coulomb interactions. Thus Fokker-Planck operators describe mainly a diffusion in the velocity space.

\subsection{Characteristic scales and regime of interest} \label{SEC_Ca}

Let us now identify some small parameters, characterizing the adiabatic regime of  plasma dynamics. This shall be done by firstly introducing the orders of magnitude of the quantities involved in the description of the phenomenon we want to analyse, in our particular case phenomena occurring at the ion spatio-temporal scales.\\

We start with the microscopic quantities and introduce our first
parameter $\eps$, as the mass ratio of electrons and ions
$$
\eps^2\,:=\,{m_e \over m_i} \,\ll\, 1\,.
$$
Next we suppose that the temperatures of electrons and ions are of the same order $\overline{T}$ 
$$
T_i\,=\,\overline{T}\, T_i'\,, \qquad T_e\,=\,\overline{T}\, T_e'\,, {\qquad T_{\alpha \beta}\,=\,\overline{T}\, T_{\alpha \beta}'\,,}
$$
meaning that the {thermal (microscopic) speeds of the two species are widely different and scale as}
$$
\overline{v}_i\,:=\, v_{th,i}=\sqrt{ k_B {\overline T} \over m_i}\,, \qquad \overline v_e\,:=\, v_{th,e}=\sqrt{ k_B {\overline T} \over m_e}={1 \over \eps}\, \overline{v}_i\,.
$$
{Furthermore we shall assume that the electric and thermal energies are of the same order of
magnitude, permitting thus to scale the electric potential as $e\, {\overline
  \phi}\,=\,k_B\, \overline T$}. 
We also suppose that the plasma is quasineutral, that is, densities of electrons and ions are of the same order $\overline{n}$ 
$$
n_i\,=\,\overline{n}\, n_i'\,, \qquad n_e\,=\,\overline{n}\, n_e'\,,
$$
permitting thus to fix the magnitude of the ion plasma frequency $\omega_p$ and of  the Debye length $\lambda_D$ as
$$
\omega_p^{-1}\,:=\,\sqrt{\overline{n}\, e^2 \over \eps_0\, m_i}\,,\qquad\lambda_D:= \sqrt{\eps_0\, k_B\, \overline{T} \over \overline{n}\, e^2}\,, 
$$
which yields the relation $\overline v_{i} = \lambda_D \,\omega_p$.

At the microscopic level again, we fix a time-scale $\tau_c$ and a length-scale $l_c$, related to the ionic collisional process, namely
$$
\tau_c\,:=\,\tau_{ii}\,=\,\nu_{ii}^{-1} ,\qquad l_c\,:=\, {\overline v}_i\, \tau_c\,,
$$
where $\tau_{ii}$ corresponds to the elapsed time between two ionic collisions (collisional frequency $\nu_{ii}$) and $l_c$ is  the corresponding mean free path.\\

Finally, let us turn to the macroscopic quantities corresponding to the physical device. The macroscopic  space-scale ${\overline x}$ is fixed as the distance of interest and the time-scale corresponds to the observation time given by ${\overline t}=\overline x/\overline v_i$, hence we set for the macroscopic velocities $\overline u_\alpha:=\overline v_\alpha$ {as well as $\overline u_{ei} = \overline u_{ie}= \overline u_e$}. \\
To characterize the regime of interest, let us introduce now a second parameter $\tau$ as the ratio between micro and macro time-scales
$$
\tau\,:=\, {\tau_c \over \overline t}
$$
and a third parameter $\lambda$ as the ratio between micro and macro space-scales
$$
\lambda\,:=\,{\lambda_D \over \overline{x}}\,=\, {\overline{v_i} \over \overline{x}} \, { 1 \over \omega_p}\,=\, {1\over \omega_p\,\overline t}\,.
$$
Concerning the different intra- and inter-species collision
frequencies, we simply set
$$
\nu_{\alpha \beta}={\overline \nu_{\alpha \beta}}\, \nu_{\alpha \beta}'\,,\qquad \forall {\alpha\,, \beta} \in \{e,i\}\,,
$$
with the order-relations given by  \cite{PHYS1}
$$
{\overline\nu_{ie}}:{\overline\nu_{ii}}:{\overline\nu_{ee}}:{\overline\nu_{ei}}=\eps^2:\eps:1:1\,.
$$
{Finally let us also fix characteristic scales for the distribution functions and the collision operators
$$
{\overline f}_\alpha= {{\overline n} \over {\overline v}_\alpha}\,, \qquad  {\overline Q}_{\alpha \beta} = {\overline \nu}_{\alpha \beta}\,{\overline f}_\alpha\,. 
$$
The units or scales chosen here are adapted to the plasma regimes we want to study (electron Boltzmann regime). The reader not so familiar with the physics of tokamak fusion plasmas and its characteristic scales is referred to the introductory books \cite{CHENF,Ruther,PHYS1,hazeltine_meiss}}.


\subsection{Non-dimensional kinetic system}
\label{SEC22}


Let us observe that we have now a set of three independent parameters
$(\eps,\tau,\lambda)$, which characterize several plasma regimes. To get the non-dimensional system, let us perform the following change of variables in the starting model  \eqref{SystemKineticDimensional} 
$$
\bfx\,=\,{\overline x}\, \bfx'\,, \qquad t\,=\,{\overline t}\, t'\,, 
$$
whereas the velocities (in the two different kinetic equations) scale differently for ions and electrons, namely
$$
\quad \bfv_i\,=\,{\overline v_i}\,\bfv'\,, \qquad  \bfv_e \,=\,{\overline v_e} \bfv'\,. 
$$
{This different scaling in the velocities is fundamental for the further study, the rescaled velocities $\bfv'$ being now of the same order for ions and electrons, fact which is a considerable advantage for numerical simulations.}
Furthermore, in \eqref{PoissonDim} we set
$$
\bfE(t,\bfx)\,=\,{\overline E}\, \bfE'(t',\bfx')\,.
$$
Altogether one obtains then the following non-dimensional system (the primes were omitted for
simplicity reasons)
\begin{equation}
\left\{
 \begin{array}{ll}
   \displaystyle \partial_t f_i \,+\,   \bfv\cdot \nabla_\bfx f_i \,+\,  \bfE \cdot \nabla_\bfv f_i \,=\,  \frac{1}{\tau}\,\left(\cQ_{ii}(f_i,f_i) \,+\, \varepsilon\, \cQ_{ie}(f_i,f_e)\right)\,, 
   \\[1.1em]      
      \displaystyle \partial_t f_e \,+\, \frac{1}{\varepsilon}\, \bfv\cdot\nabla_\bfx f_e \,-\,  \frac{1}{\varepsilon}\, \bfE\cdot\nabla_\bfv f_e \,=\, \frac{1}{\tau\,\varepsilon} \left( \cQ_{ee}(f_e,f_e) \,+\, \cQ_{ei}(f_e,f_i) \right)\,,
    \end{array}
  \right.
\label{SystemKineticNonDimensional_bis}  
\end{equation}
supplemented with Poisson's equation
$$
-\lambda^2\,\Delta \phi \,=\, n_i - n_e\,, \qquad \bfE = - \nabla_\bfx \phi.
$$
\cla{Starting from this non-dimensional model, we choose the following regime:
\begin{itemize}
\item the ratio between micro and macro time-scales is considered fixed $\tau=1$; 
\item the ratio between micro and macro space-scales is considered also fixed $\lambda=1$; 
\item the electron-to-ion mass ratio $\eps \ll 1$, will be the only perturbation parameter we shall take into account.
\end{itemize}}
This choice permits to focus on the electron adiabatic  asymptotics, without adding additional difficulties coming from the quasi-neutral limit $\lambda \ll 1$  studied for instance in \cite{Bob2, HKR}. The fact that we set $\tau =  1$ is justified by our aim to keep the ions kinetic. Other asymptotic regimes can be naturally investigated.\\

\noindent The rescaled macroscopic quantities are given now  for $\alpha\in\{e,\,i\}$ by
\begin{equation*}
  \label{MMo_1D}
  \left\{
  \begin{array}{l} 
 \ n_{\alpha} := \ds\int_{\RR^d} f_{\alpha} \,\dD\bfv \,,
 \\[1.1em]
n_{\alpha}\, \bfu_{\alpha} :=\ds \int_{\RR^d}  \bfv \,f_{\alpha}\, \dD\bfv \,,
 \\[1.1em]
w_{\alpha} := \ds\frac{1}{2}\int_{\RR^d} |\bfv|^2 \, f_{\alpha}\,\dD \bfv \,=\, \frac{d}{2} \, n_{\alpha} T_{\alpha} + {1\over 2} \,  n_{\alpha}\,|\bfu_{\alpha}|^2\,,
 \end{array}\right.
\end{equation*}
where
$$
d\, n_{\alpha}\, T_{\alpha}:= \int_{\RR} |\bfv -  \bfu_{\alpha}|^2\, f_{\alpha}\,\dD\bfv\,,
$$
and the pressure tensor $\bfp_\alpha$ as well as the heat flux $\bfq_\alpha$ are given by
$$
\left\{
\begin{array}{l}
  \ds\bfp_{\alpha} \,:=\, \int_{\RR^d}  (\bfv - \bfu_{\alpha})\otimes(\bfv - \bfu_{\alpha})\,f_{\alpha}\,\dD \bfv \,,
  \\[1.1em]
  \ds\bfq_{\alpha} \,:=\, \frac{1}{2}\int_{\RR^d}  (\bfv - \bfu_{\alpha})\,|\bfv - \bfu_{\alpha}|^2 \,f_{\alpha}\,\dD \bfv \,,
  \end{array}\right.
$$
whereas the non-dimensional collision operators read now for $\alpha\in\{e,\,i\}$ as
$$
  \cQ_{\alpha\alpha}(f_\alpha,f_\alpha) \,=\, \nu_{\alpha\alpha}\, \Div_{\bfv}\left( (\bfv -  {\bfu_\alpha})\, {f_\alpha} \,+\, {T_\alpha}\, \nabla_{\bfv} f_\alpha \right)\,,
$$
and
$$
\left\{
\begin{array}{l}
  \ds \cQ_{ei}(f_e,f_i) \,=\, \nu_{ei}\,\Div_{\bfv}\left( ({\bfv} -  {\bfu_{ei}}) \,{f_e} \,+\, T_{ei}\, \nabla_{{\bfv}} {f_e} \right)\,,
  \\[1.1em]
\ds \cQ_{ie}(f_i,f_e) \,=\,  \nu_{ie}\,\Div_{{\bfv}}\left( ({\bfv} - {\bfu_{ie}\over\eps})\, {f_i} \,+\, T_{ie}\, \nabla_{{\bfv}} {f_i} \right)\,,
\end{array}\right.
$$
with the mixed quantities
\be \label{defuieAdim}
{\bfu_{ei}} \,=\, \frac{{\bfu_e} \,+\, \eps\, {\bfu_i}}{2} \,=\, {\bfu_{ie}}\,,
\ee
\be
T_{ei} \,=\,  T_{ie} \,=\, \frac{1}{1 \,+\, \varepsilon^2} \left( {T_e} \,+\, \varepsilon^2\, {T_i} \,+\, {|{\bfu_e} \,-\, \eps\, {\bfu_i}|^2 \over 2d} \right)\,.
\label{defThetaeiieAdim}
\ee
{To give only an example for these scalings, let us detail the temperature rescaling. Starting from \eqref{mixed2} and using the characteristic values defined in Section \ref{SEC_Ca} one obtains
$$
T_{ei} = \overline{T}\, {T_e'+{m_e/m_i}\, T_i' \over m_e/m_i +1} + {m_e \, {\overline u_e}^2 \over 1+ m_e/m_i } \, { | u_e' - {\overline{u_i} \over \overline{u_e}}\, u_i' |^2\over 2d k_B}=  \frac{\overline T}{1 \,+\, \varepsilon^2} \left( {T_e'} \,+\, \varepsilon^2\, {T_i'} \,+\, {|{\bfu'_e} \,-\, \eps\, {\bfu'_i}|^2 \over 2d} \right)\,.
$$}

The non-dimensional model \eqref{SystemKineticNonDimensional_bis}-\eqref{defThetaeiieAdim} will be our starting point for the $\eps \rightarrow 0$ asymptotic study. One can observe that the time scale of interest in this paper corresponds to the average time between two ionic collisions. This time is much larger than the characteristic time of electron collisions. As a consequence, we can expect that in the limit $\eps \rightarrow 0$ the ions remain kinetic and the electrons reach a certain macroscopic regime due to the numerous collisions they undertake.\\

Let us now prove the properties of conservation and entropy decay, already presented for the dimensional operators. Firstly, let us introduce for $\alpha\in\{e,\,i\}$ the adimensional Maxwellian distributions $\cM_{\alpha}$ {obtained by rescaling \eqref{localM}, {\it i.e.}}
$$
\cM_{\alpha}(t,x,v) \,:=\, \frac{n_\alpha}{\left(2 \pi\, T_{\alpha}\right)^{d/2}}\, \exp\left(-\frac{|\bfv - \bfu_{\alpha}|^2}{2\, T_{\alpha}}\right),\qquad \forall \alpha\in\{e,\,i\}\,,
$$
which correspond to the equilibrium distributions of the operators $\cQ_{ee}$ and $\cQ_{ii}$. Then we also define the equilibria for $\cQ_{ei}$ and  $\cQ_{ie}$ via
$$
\left\{
\begin{array}{l}
 \ds \cM_{ei}(t,x,v) \,:=\, \frac{n_e}{\left(2 \pi\, T_{ei}\right)^{d/2}}\, \exp\left(-\frac{|\bfv - \bfu_{ei}|^2}{2\, T_{ei}}\right),
    \\[1.1em]
\ds\cM_{ie}(t,x,v) \,:=\, \frac{n_i}{\left(2 \pi\, T_{ie}\right)^{d/2}}\, \exp\left(-\frac{|\eps\bfv - \bfu_{ie}|^2}{2\,\eps^2 \,T_{ie}}\right),
\end{array}
\right.
$$
{which are nothing but the rescaled versions of \eqref{oufff}}.
To simplify the formulae, let us denote in the following by $h_{\alpha\beta}$ respectively $h_{\alpha}$ the functions 
\be
\label{def:h}
h_{\alpha\beta} := \frac{f_\alpha}{\cM_{\alpha\beta}}\,, \qquad h_\alpha := h_{\alpha\alpha}\,.
\ee
With these new notations, we can rewrite the collision operators in the simpler form
$$
\cQ_{\alpha\beta}(f_\alpha,f_\beta) \,=\, \nu_{\alpha\beta}\,\Div_{\bfv}\left( T_{\alpha\beta}\, \cM_{\alpha\beta}\, \nabla_{{\bfv}} {h_{\alpha\beta}} \right)\,,\qquad \forall \alpha,\beta\in\{e,\,i\}\,.
$$

\begin{prop}
  \label{prop:1}
Under the constraint
\be
\nu_{ei} \,n_e \,\,=\,\, \nu_{ie} \,n_i\,,
\label{relationCollisionFrequencyAdimentional}
\ee
 corresponding to the {rescaled version of }
 \eqref{relationCollisionFrequency}, we have the following conservations
\begin{equation*}
  \int_{\mathbb{R}^d} \cQ_{ei}(f_e,f_i)
\begin{pmatrix}
 \bfv \\[0.7em]
 \ds\frac{|\bfv|^2}{2}
\end{pmatrix}
\dD \bfv\,
\,+\, \int_{\mathbb{R}^d} \cQ_{ie}(f_i,f_e)
\begin{pmatrix}
\eps\,\bfv \\[0.7em]
\ds \eps^2\,\frac{|\bfv|^2}{2}
\end{pmatrix}
\dD \bfv\,=\,0\,.
\end{equation*}
Furthermore, defining the inter-species entropy dissipation  $\cI$ by
\begin{equation}
\cI(t,x) \;:=\,  -\int_{\mathbb{R}^d} \left[\cQ_{ei}(f_e,f_i) \ln(f_e) \,+\, \eps^2 \,\cQ_{ie}(f_i,f_e) \ln(f_i)\right]\,\dD\bfv\,,
\label{EntropyDecayNonDim}
\end{equation}
we have
$$
\cI \,=\, \int_{\mathbb{R}^d}\left[\nu_{ei}\,T_{ei}\frac{\cM_{ei}}{h_{ei}} \,\left|\nabla_vh_{ei}\right|^2\,+\,\eps^2\,\nu_{ie}\,T_{ie}\frac{\cM_{ie}}{h_{ie}} \left|\nabla_v h_{ie}\right|^2\right]\,\dD v  \geq 0\,.
$$
\end{prop}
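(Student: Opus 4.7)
The structure is standard for Fokker-Planck type operators in divergence form: everything follows from integration by parts after carefully choosing the right rewriting of the operator. I will carry out the three assertions (momentum conservation, energy conservation, entropy identity) separately, because each uses the hypothesis \eqref{relationCollisionFrequencyAdimentional} together with the specific definitions \eqref{defuieAdim}--\eqref{defThetaeiieAdim} in a different way.

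\textbf{Step 1 (momentum).} Writing $\cQ_{ei}=\nu_{ei}\Div_{\bfv}\bfJ_{ei}$ with $\bfJ_{ei}=(\bfv-\bfu_{ei})f_e+T_{ei}\nabla_{\bfv} f_e$ and similarly $\bfJ_{ie}$, one integration by parts gives
$$
\int \cQ_{ei}\,\bfv\,\dD\bfv=-\nu_{ei}\int\bfJ_{ei}\,\dD\bfv=-\nu_{ei}n_e(\bfu_e-\bfu_{ei}),
$$
and an analogous formula for the second integral yields $-\nu_{ie}(\eps n_i\bfu_i-n_i\bfu_{ie})$. Summing, invoking $\nu_{ei}n_e=\nu_{ie}n_i$ from \eqref{relationCollisionFrequencyAdimentional}, and replacing $\bfu_{ei}=\bfu_{ie}=(\bfu_e+\eps\bfu_i)/2$ from \eqref{defuieAdim}, a short algebraic check shows cancellation.

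\textbf{Step 2 (energy).} The same integration by parts, but tested against $|\bfv|^2/2$, produces
$$
\int \cQ_{ei}\,\tfrac{|\bfv|^2}{2}\,\dD\bfv=\nu_{ei}\,n_e\bigl[d(T_{ei}-T_e)+\bfu_{ei}\cdot\bfu_e-|\bfu_e|^2\bigr],
$$
using $\int\nabla_{\bfv}f_e\cdot\bfv\,\dD\bfv=-d\,n_e$ and $\int|\bfv|^2f_e\,\dD\bfv=d\,n_eT_e+n_e|\bfu_e|^2$. The corresponding contribution from $\cQ_{ie}$, after multiplication by $\eps^2$, gives $\eps^2\nu_{ie}n_i[d(T_{ie}-T_i)-|\bfu_i|^2]+\eps\nu_{ie}n_i\bfu_{ie}\cdot\bfu_i$. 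Using \eqref{relationCollisionFrequencyAdimentional} and $T_{ei}=T_{ie}$, the bracket of velocity terms collapses to $-|\bfu_e-\eps\bfu_i|^2/2$ thanks to $\bfu_e+\eps\bfu_i=2\bfu_{ei}$, while the temperature bracket yields $d[(1+\eps^2)T_{ei}-T_e-\eps^2 T_i]$. By the definition \eqref{defThetaeiieAdim} of $T_{ei}$, this last quantity equals $|\bfu_e-\eps\bfu_i|^2/2$, which cancels the velocity contribution exactly.

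\textbf{Step 3 (entropy identity).} This is the key step and the one where the choice of mixed quantities really pays off. I split
$$
\ln f_e=\ln h_{ei}+\ln\cM_{ei},\qquad \ln f_i=\ln h_{ie}+\ln\cM_{ie},
$$
and treat the two pieces separately. For the $\ln h_{ei}$ piece, the compact form $\cQ_{ei}=\nu_{ei}\Div_{\bfv}(T_{ei}\cM_{ei}\nabla_{\bfv} h_{ei})$ combined with one integration by parts yields directly
$$
-\int\cQ_{ei}\ln h_{ei}\,\dD\bfv=\nu_{ei}\int T_{ei}\,\cM_{ei}\,\frac{|\nabla_{\bfv} h_{ei}|^2}{h_{ei}}\,\dD\bfv,
$$
which is the first summand of the claimed formula for $\cI$; an identical computation gives the second summand. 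It remains to show that the cross term $-\int\cQ_{ei}\ln\cM_{ei}\,\dD\bfv-\eps^2\int\cQ_{ie}\ln\cM_{ie}\,\dD\bfv$ vanishes. Writing $\ln\cM_{ei}=C_{ei}(t,x)-|\bfv-\bfu_{ei}|^2/(2T_{ei})$, and similarly for $\cM_{ie}$, the constants disappear because $\cQ_{ei}$ and $\cQ_{ie}$ are in divergence form (so $\int\cQ_{\alpha\beta}\,\dD\bfv=0$). The remaining quadratic piece, after using $T_{ei}=T_{ie}$ and $\bfu_{ei}=\bfu_{ie}$, is a linear combination of $\int\cQ_{ei}\bfv\,\dD\bfv+\int\cQ_{ie}\eps\bfv\,\dD\bfv$ and $\int\cQ_{ei}|\bfv|^2\,\dD\bfv+\int\cQ_{ie}\eps^2|\bfv|^2\,\dD\bfv$, both of which vanish by Steps 1--2. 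Non-negativity of $\cI$ is immediate since $\cM_{\alpha\beta}/h_{\alpha\beta}=\cM_{\alpha\beta}^2/f_\alpha\ge 0$.

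\textbf{Main obstacle.} The only genuinely delicate point is the cancellation in Step 2: it forces the exact form \eqref{defThetaeiieAdim} of $T_{ei}$ (one can see this step as reverse-engineering the definition). Steps 1 and 3 are then essentially automatic, provided one notices the right splitting of $\ln f_e$ and invokes the conservation laws established in Steps 1--2.
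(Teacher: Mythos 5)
Your proposal is correct and follows essentially the same route as the paper: the conservation laws are obtained by direct integration by parts using \eqref{relationCollisionFrequencyAdimentional} together with \eqref{defuieAdim}--\eqref{defThetaeiieAdim}, and the entropy identity rests on the splitting $\ln f_\alpha=\ln h_{\alpha\beta}+\ln\cM_{\alpha\beta}$ combined with the vanishing of $\int\cQ_{ei}\ln\cM_{ei}\,\dD\bfv+\eps^2\int\cQ_{ie}\ln\cM_{ie}\,\dD\bfv$, which is exactly the identity \eqref{ddd} invoked in the paper. Your write-up merely makes explicit the computations the paper leaves as ``direct,'' including the instructive observation that the cancellation in the energy balance forces the precise form of $T_{ei}$.
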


\begin{proof}
Conservations of mass, momentum and energy follow from direct computations, whereas entropy dissipation is obtained observing simply that
\be \label{ddd}
\int_{\mathbb{R}^d} \cQ_{ei}(f_e,f_i) \,\ln(\cM_{ei})\, \dD\bfv  \,+\, \eps^2\int_{\mathbb{R}^d} \cQ_{ie}(f_i,f_e) \,\ln(\cM_{ie}) \,\dD\bfv  \,=\, 0.
\ee
\end{proof}

For the investigation of the asymptotic limit $\eps \rightarrow 0$ it will be necessary to have in mind the rescaled macroscopic electron equations corresponding to \eqref{SystemKineticNonDimensional_bis}, namely 
{\be \label{fluid}
\left\{
\begin{array}{l}
 \ds \eps \, \pa_t n_e \,+\, \Div_\bfx (n_e \, \bfu_e) \,=\, 0 \,,
 \\[1.1em]
 \ds\eps\, \pa_t (n_e\,\bfu_e) \,+\,  \Div_\bfx \left(n_e\, \bfu_e\otimes \bfu_e \,+\,\bfp_e \right)\,+\, n_e\, \bfE\,=\,-\nu_{ei}\, n_e \,  {\bfu_e\,-\, \eps\,\bfu_i \over 2}\,,
 \\[1.1em]
 \ds\eps\, \pa_t w_e \,+\, \Div_\bfx \left(w_e \,\bfu_e\, +\,
  \bfp_e \, \bfu_e + \bfq_e\right) \,+\, n_e\,\bfu_e \cdot \bfE\,=\, S_{ei}\,,
\end{array}\right.
\ee
where {the energy exchange term reads}
\be
\label{def:Sei}
 \ds S_{ei}\, :=\, \ds \int_{\mathbb{R}^d} \cQ_{ei}(f_e,f_i)\,\frac{|\bfv|^2}{2} \dD\bfv
  \, =\ - \nu_{ei}\, n_e \,\left[ d\, (T_e- T_{ei}) \,+\, \bfu_e\cdot\frac{\bfu_e-\eps\,\bfu_i}{2}\right]\,,
\ee
{consisting of a first term corresponding to the temperature equilibration and a second term corresponding to the work done by friction.}
Observe that if we assume that all macroscopic quantities are uniformly bounded with respect to $\eps$, {and replacing $T_{ei}$ by the expression \eqref{defThetaeiieAdim}, we obtain}
$$
 S_{ei}  \,=\,  \nu_{ei}\, n_e \,\left( -{\eps  \over 2} \, {\bfu_i \cdot \bfu_e} \,+\,  \eps^2\,d\, (T_i - T_e) 
 \,+\, \eps^2\, {|\bfu_i|^2- |\bfu_e|^2 \over 2}\right) \,+\, {\mathcal O}(\eps^3)\,,
$$
{which permits to see that temperature equilibration between ions and electrons occurs on a long time scale when $\eps$ is small (factor $\eps^2$) which means that ions and electrons can become Maxwellians (due to the collisions) long before their temperature equilibrate.}
This system is not closed as the pressure tensor $\bfp_e$ and the heat flux $\bfq_e$ cannot be expressed with the help of the other three macroscopic variables $(n_e,\bfu_e,w_e)$. However in the limit $\eps \rightarrow 0$ one can close this system, as shall be shown in the sequel, the asymptotic model being given in Theorem \ref{th:1}.\\

\section{Formal derivation of the asymptotic model}
\label{SEC3}
\setcounter{equation}{0}
\setcounter{figure}{0}
\setcounter{table}{0}

Let us consider from now the one-dimensional case.
The main goal of this section is to understand more about the asymptotic
limit $\eps \rightarrow 0$ of the following kinetic system
\begin{equation}
\left\{
 \begin{array}{ll}
      \displaystyle \partial_t f_i^\eps +   v\, \partial_x f_i^\eps  +  E^\eps  \, \partial_v f_i^\eps  =    \cQ_{ii}(f_i^\eps ,f_i^\eps ) + \varepsilon\, \cQ_{ie}(f_i^\eps ,f_e^\eps )\,, \\[1.1em]      
      \displaystyle \eps\,\partial_t f_e^\eps  + \, v\, \partial_x f_e^\eps  -  \, E^\eps \, \partial_v f_e^\eps  =  \cQ_{ee}(f_e^\eps ,f_e^\eps ) + \cQ_{ei}(f_e^\eps ,f_i^\eps )\,,
    \end{array}
  \right.
\label{eq:kin}  
\end{equation}
coupled to Poisson's equation
\be \label{poisson}
- \partial_{xx} \phi^\eps  = n_i^\eps  - n_e^\eps \,, \qquad E^\eps  = - \partial_x \phi^\eps \,.
\ee
To fix the potential, let us impose the constraint of zero average
\be
\label{constr-phi}
\int_{\TT} \phi^\eps (t,x)\, \dD x \,=\,0\,, \quad \forall t\,>\,0\,.
\ee
The study of the asymptotic $\varepsilon\rightarrow 0$ requires estimates that are uniform with respect to $\varepsilon$. 
For our coupled system, the only natural identities providing such
bounds are mass conservation, free energy and entropy
inequalities. Let us first introduce the  kinetic energy associated with each species
\[
K(t) \,:=\, K_e(t) \,+\, \,K_i(t)\,,
\]
where 
$$
K_{\alpha} \,:=\, \frac{1}{2}\,\iint_{\TT\times\mathbb{R}}|v|^2\,f_{\alpha}^\eps  \,\dD v \dD x\,, \qquad\forall\alpha\in\{e,\,i\}\,.
$$
The characteristic energy related to the electrostatic effects is the electric energy and reads
$$
U(t) \,:=\, \frac{1}{2}\,\int_{\TT}|\partial_x\phi^\eps |^2 \dD x \,=\, \frac{1}{2}\,\int_{\TT}\phi^\eps  \,(n_i^\eps  \,-\, n_e^\eps ) \,\dD x\,,
$$
where the last equality stems from Poisson's equation. With these notation we have now the following result.
\begin{prop}
  \label{prop:2} {\bf (Energy conservation)}
  Suppose that $(f_e^\eps , f_i^\eps , \phi^\eps )$ is a   solution of \eqref{eq:kin}-\eqref{constr-phi} such that $f_e^\eps $ and $f_i^\eps $ are nonnegative and satisfy initially
\be
  \label{init:0}
  \iint_{\TT \times \RR} \left[f_e^\eps (0) + f_i^\eps (0) \right] \,(1\,+\,|v|^2) \,\dD v\,\dD x  < \infty\,.
\ee
  Then, one has the energy conservation, for all $\varepsilon>0$ 
     \[
 \mathcal{E}(t) \,:=\, U(t) + K(t) \,=\,{\mathfrak E}, \qquad \forall \,t\geq 0\,,
 \]
 where ${\mathfrak E}$ is given by the initial value
 \be
 \label{init:ener}
{\mathfrak E}\,:=\,\mathcal{E}(0) \,<\,\infty\,.
 \ee
\end{prop}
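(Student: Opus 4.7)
My strategy is to compute $\frac{d\mathcal{E}}{dt}$ directly by handling the three pieces $K_i$, $K_e$, $U$ separately. The electric contributions will cancel exactly, while the remaining inter-species collision integrals will vanish thanks to the energy identity of Proposition~\ref{prop:1}.

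\textbf{Kinetic energies.} I multiply the ion equation in \eqref{eq:kin} by $|v|^2/2$ and integrate over $\TT\times\RR$; periodicity in $x$ kills the transport term, integration by parts in $v$ turns the force term into $\int_\TT E^\eps n_i^\eps u_i^\eps\,\dD x$, and the intra-species operator $\cQ_{ii}$ conserves energy. The same procedure on the electron equation, multiplied this time by $|v|^2/(2\eps)$ before integration, gives the electron balance. The result is
\[
\frac{dK_i}{dt} = \int_\TT E^\eps n_i^\eps u_i^\eps\,\dD x + \eps\iint \cQ_{ie}(f_i^\eps,f_e^\eps)\,\frac{|v|^2}{2}\,\dD v\,\dD x,
\]
\[
\frac{dK_e}{dt} = -\frac{1}{\eps}\int_\TT E^\eps n_e^\eps u_e^\eps\,\dD x + \frac{1}{\eps}\iint \cQ_{ei}(f_e^\eps,f_i^\eps)\,\frac{|v|^2}{2}\,\dD v\,\dD x.
\]

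\textbf{Electric energy.} Integrating each kinetic equation in $v$ yields the continuity equations $\partial_t n_i^\eps + \partial_x(n_i^\eps u_i^\eps) = 0$ and $\eps\,\partial_t n_e^\eps + \partial_x(n_e^\eps u_e^\eps) = 0$. Differentiating Poisson's equation \eqref{poisson} in time then gives $\partial_t E^\eps = -n_i^\eps u_i^\eps + \eps^{-1}n_e^\eps u_e^\eps + C(t)$ for some $x$-independent function $C(t)$. Since $E^\eps=-\partial_x\phi^\eps$ with $\phi^\eps$ periodic (equivalently \eqref{constr-phi}), $E^\eps$ has zero mean on $\TT$, so the $C(t)$-term disappears when multiplied by $E^\eps$ and integrated; hence
\[
\frac{dU}{dt} = \int_\TT E^\eps\,\partial_t E^\eps\,\dD x = -\int_\TT E^\eps n_i^\eps u_i^\eps\,\dD x + \frac{1}{\eps}\int_\TT E^\eps n_e^\eps u_e^\eps\,\dD x,
\]
which cancels, term by term, the electric contributions of the two kinetic balances above.

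\textbf{Conclusion and main obstacle.} Summing the three identities leaves only
\[
\frac{d\mathcal{E}}{dt} = \eps\iint \cQ_{ie}(f_i^\eps,f_e^\eps)\,\frac{|v|^2}{2}\,\dD v\,\dD x + \frac{1}{\eps}\iint \cQ_{ei}(f_e^\eps,f_i^\eps)\,\frac{|v|^2}{2}\,\dD v\,\dD x,
\]
which vanishes by Proposition~\ref{prop:1}, since $\iint \cQ_{ei}|v|^2/2 = -\eps^2\iint \cQ_{ie}|v|^2/2$. Integrating in time and using \eqref{init:ener} gives $\mathcal{E}(t) = \mathfrak{E}$. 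The only delicate aspect is the careful bookkeeping of the $\eps$-powers: the $\eps$ in front of $\partial_t f_e^\eps$ forces a $1/\eps$ on the electron side, which must match simultaneously the $1/\eps$ emerging in $dU/dt$ from the electron continuity equation and the $\eps^2$ appearing in the cross-species conservation of Proposition~\ref{prop:1}. The $v$-integrations by parts are justified by propagating the initial moment bound \eqref{init:0}, standard within the class of solutions considered.
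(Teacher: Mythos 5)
Your proof is correct and follows essentially the same route as the paper: multiply each kinetic equation by the appropriately weighted $|v|^2/2$, convert the electric work terms into $\frac{\dD U}{\dD t}$ via the continuity and Poisson equations, and cancel the remaining inter-species collision integrals using the energy identity of Proposition~\ref{prop:1}. The only cosmetic difference is normalization (the paper weights the ion equation by $\eps\,v^2/2$ and obtains $\eps\,\frac{\dD\mathcal{E}}{\dD t}=0$, whereas you divide the electron balance by $\eps$ and get $\frac{\dD\mathcal{E}}{\dD t}=0$ directly), and your explicit treatment of the integration constant $C(t)$ via the zero mean of $E^\eps$ is a welcome detail the paper leaves implicit.
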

\begin{proof}
Let us first remark that the condition \eqref{init:0} together with \eqref{poisson} implies $||\partial_x \phi^\eps  (0)||^2_{L^2(\TT)} < \infty$, such that we have indeed a bounded initial energy $\mathcal{E}(0) \,<\,\infty$.

Now, let us multiply the first equation in \eqref{eq:kin} by $\eps\, v^2/2$, the second one by $v^2/2$ and integrate with respect to $(x,v)\in\TT \times \mathbb R$. This yields after summing up the two equations and integrating by parts
  \begin{eqnarray}
  \label{eq:tmp1}
  &&\eps\,\frac{\dD K}{\dD t}(t) - \int_{\TT} E^\eps  \,(\eps\,n_i^\eps \,u_i^\eps  - n_e^\eps  \,u_e^\eps )\,\dD x
  \\
  \nonumber
  &&= \frac{1}{2}\iint_{\TT\times\mathbb R} |v|^2\left(\varepsilon^2\, \cQ_{ie}(f_i^\eps ,f_e^\eps ) + \cQ_{ei}(f_e^\eps ,f_i^\eps )\right) \dD v\, \dD x\,. 
  \end{eqnarray}
On one hand, applying Proposition \ref{prop:1}, we show that the right hand side vanishes. On the other hand from the continuity equation
$$
\eps\,\partial_t ( n_i^\eps  - n_e^\eps ) \,+\,\partial_x(\eps \,n_i^\eps  \,u_i^\eps  \,-\, n_e^\eps \, u_e^\eps ) \,=\, 0\,,  
$$
and Poisson's equation for $\phi^\eps $, we get that the second term in \eqref{eq:tmp1} becomes
$$
-\int_{\TT} E^\eps \, (\eps\,n_i^\eps \,u_i^\eps  \,-\, n_e^\eps \, u_e^\eps )\,\dD x \,=\,  \eps\,\frac{\dD U}{\dD t}(t)\,.
$$
Finally gathering the latter equalities, we get the conservation of the total energy
\[
 \eps\,\frac{\dD\mathcal{E}}{\dD t}(t) \,=\, \eps\,\frac{\dD}{\dD t}\left(U(t) \,+\, K(t)\right) \,=\,0, \qquad \forall \,t\,\geq\, 0\,.
 \]
\end{proof}
Let us define now the entropy of each species by the formula
\[
\cH_{\alpha}(t) \,:=\, \iint_{\TT\times\mathbb{R}} f_{\alpha}^\eps \ln(f_{\alpha}^\eps )\, \dD v\, \dD x\,, \qquad\forall \,\alpha\in\{e,\,i\}\,,
\]
and prove the following result.
\begin{prop}{\bf (Entropy decay)}
   \label{prop:3}
   Suppose that $(f_e^\eps , f_i^\eps , \phi^\eps )$ is a  solution of \eqref{eq:kin}-\eqref{constr-phi}, such that $f_e^\eps $ and $f_i^\eps $ are nonnegative, satisfying moreover
\be
  \label{init:1}
  \cH_e(0) + \cH_i(0)  < \infty\,.
\ee
   Then, one has the following entropy estimate, for all $\varepsilon>0$
  \begin{eqnarray*}
    \cH_e(t) \,+\, \cH_i(t)  &+&\frac{1}{\eps}\int_0^t
                                           \iint_{\TT\times\mathbb R}
                                           \left[ \nu_{ee}\,
                                           T_{ee}^\eps \, {\cM_{e}
                                           \over h_e^\eps }\,
                                           \left|\partial_vh_e^\eps \right|^2
                                           \,+\,\nu_{ei}\,T_{ei}^\eps \,\frac{\cM_{ei}}{h_{ei}^\eps }
                                           \,\left|\partial_vh_{ei}^\eps \right|^2
                                           \right]\dD v\,\dD x \, \dD s
    \\[1.1em]
    &+&\int_0^t\iint_{\TT\times\mathbb R} \left[ \nu_{ii}\, T_{ii}^\eps \,
        {\cM_{i} \over h_i^\eps }
        \left|\partial_vh_i^\eps \right|^2\,+\,
        \eps\,\nu_{ie}\,T_{ie}^\eps \,\frac{\cM_{ie}}{h_{ie}^\eps }
        \left|\partial_vh_{ie}^\eps \right|^2\right]\dD v \dD x \dD s
        \\[1.1em]
    &&\leq \cH_e(0) + \cH_i(0),\qquad \forall t \ge 0\,,
  \end{eqnarray*}
  where $h_{ei}^\eps $ and $h_e^\eps $ are given in \eqref{def:h}.
\end{prop}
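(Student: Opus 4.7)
The plan is to multiply the ion equation in \eqref{eq:kin} by $\ln f_i^\eps$ and the electron equation by $\ln f_e^\eps/\eps$, integrate over $\TT \times \RR$, sum the two identities, and integrate in time from $0$ to $t$. The factor $1/\eps$ on the electron side is crucial: it ensures that the two time-derivative contributions produce exactly $\frac{d}{dt}(\cH_e + \cH_i)$ without $\eps$-weights, while the transport operators $v\partial_x$ and $E^\eps \partial_v$ integrate out using the identity $\ln f \, \partial_\xi f = \partial_\xi(f\ln f - f)$ together with periodicity in $x$ and sufficient decay at $|v|\to\infty$.

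Once the transport terms are removed, I would first treat the intra-species collisions. Using the compact form $\cQ_{\alpha\alpha}(f_\alpha^\eps,f_\alpha^\eps) = \nu_{\alpha\alpha}\,\partial_v(T_\alpha^\eps \,\cM_\alpha \,\partial_v h_\alpha^\eps)$ and the decomposition $\ln f_\alpha^\eps = \ln \cM_\alpha + \ln h_\alpha^\eps$, the contribution from $\ln \cM_\alpha$ vanishes since it is an affine combination of $1, v, |v|^2$, which are collisional invariants of $\cQ_{\alpha\alpha}$ by construction. An integration by parts in $v$ on the remaining piece gives
$$\iint_{\TT \times \RR} \ln f_\alpha^\eps \,\cQ_{\alpha\alpha}(f_\alpha^\eps,f_\alpha^\eps)\, \dD v\, \dD x \,=\, -\iint_{\TT \times \RR} \nu_{\alpha\alpha}\,T_\alpha^\eps\,\frac{\cM_\alpha}{h_\alpha^\eps}\,|\partial_v h_\alpha^\eps|^2\, \dD v \,\dD x,$$
which accounts for the $\nu_{ii}$-dissipation and, after weighting by $1/\eps$, for the $\nu_{ee}$-dissipation in the statement.

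For the inter-species contributions, after summing the two weighted identities the remaining collisional terms read
$$\frac{1}{\eps}\iint \ln f_e^\eps\, \cQ_{ei}(f_e^\eps,f_i^\eps) \, \dD v \,\dD x \,+\, \eps\iint \ln f_i^\eps\, \cQ_{ie}(f_i^\eps,f_e^\eps) \, \dD v \,\dD x,$$
which I factor as $\frac{1}{\eps}\iint \left[\ln f_e^\eps\, \cQ_{ei}+\eps^2 \ln f_i^\eps\, \cQ_{ie}\right] \dD v \,\dD x$. By the definition \eqref{EntropyDecayNonDim} and Proposition~\ref{prop:1}, this is precisely $-\frac{1}{\eps}\int_\TT \cI(t,x)\,\dD x$, which reproduces exactly the $\nu_{ei}$- and $\eps\,\nu_{ie}$-dissipation terms of the stated bound. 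Summing all pieces yields, at the formal level, equality; integrating in time from $0$ to $t$ gives the result.

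The main obstacle is rigor rather than conceptual: the computation is formal because $\ln f_\alpha^\eps$ may be singular where $f_\alpha^\eps$ vanishes, and justifying both the integration by parts at $|v|=\infty$ and the cancellation of the $\ln \cM_{\alpha\beta}$ pieces requires sufficient velocity-moment bounds on $f_\alpha^\eps$. The standard remedy is to regularize via $f_\alpha^\eps \leadsto f_\alpha^\eps + \delta$, perform the calculation on the regularization, and pass to the limit $\delta \to 0$ using monotone/dominated convergence. The control of $\iint |v|^2 f_\alpha^\eps \,\dD v\,\dD x$ uniformly in time (inherited from the energy conservation in Proposition~\ref{prop:2}) together with the initial bound \eqref{init:1} provides the required domination, and the dissipation terms, being quadratic in $\partial_v h$, are only lower semi-continuous under this limiting procedure, which is why the final statement is an inequality rather than an equality.
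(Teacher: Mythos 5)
Your proposal is correct and follows essentially the same route as the paper: the authors multiply the ion and electron equations by $\eps(\ln f_i^\eps+1)$ and $\ln f_e^\eps+1$ respectively (your multipliers times $\eps$, the added constant being immaterial by mass conservation), identify the intra-species dissipation by the same integration by parts, recognize the inter-species block as $-\int_\TT\cI\,\dD x$ via Proposition \ref{prop:1} and \eqref{ddd}, and then divide by $\eps$ and integrate in time. Your additional remarks on regularization and lower semicontinuity go beyond the paper's purely formal computation but do not change the argument.
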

\begin{proof}
The entropy estimates are obtained by multiplying the first two equations of \eqref{eq:kin} by $\eps(\ln(f_i^\eps ) + 1) $ and $\ln(f_e^\eps ) + 1$ respectively and integrating in $(x,v)$. This yields
\begin{eqnarray*}
\begin{array}{lll}
 \ds  \eps\,\frac{\dD }{\dD t}\left(\cH_e(t) \,+\, \cH_i(t) \right)&=&\ds -\nu_{ee} \iint_{\TT\times\mathbb R} T_{ee}^\eps \, {\cM_{e} \over h_e^\eps } \,\left|\partial_vh_e^\eps \right|^2\dD v\,\dD x \\[1.1em]
&&\ds -\,\eps\,\nu_{ii}\iint_{\TT\times\mathbb R} T_{ii}^\eps \, {\cM_{i} \over h_i^\eps } \,\left|\partial_vh_i^\eps \right|^2\,\dD v\, \dD x\,-\, \int_{\TT} \cI(t,x)\, dx\,,
\end{array}
\end{eqnarray*}
where $\cI$ is given in \eqref{EntropyDecayNonDim}. {For these computations we needed again \eqref{ddd} as well as the conservation laws.}
Dividing by $\eps$ and using the expression of $\cI$, permits to get
\begin{eqnarray*}
 \frac{\dD }{\dD t}\left(\cH_e(t) \,+\, \cH_i(t) \right) =&& -\frac{1}{\eps}\iint_{\TT\times\mathbb R} \left[ \nu_{ee} \, T_{ee}^\eps \, {\cM_{e} \over h_e^\eps }  \left|\partial_vh_e^\eps \right|^2 \,+\,\nu_{ei}\,T_{ei}^\eps \,\frac{\cM_{ei}}{h_{ei}^\eps }  \left|\partial_vh_{ei}^\eps \right|^2 \right]\dD v\,\dD x
  \\ &&-\, \iint_{\TT\times\mathbb R} \left[ \nu_{ii}\, T_{ii}^\eps \, {\cM_{i} \over h_i^\eps } \left|\partial_vh_i^\eps \right|^2\,+\, \eps\,\nu_{ie}\,T_{ie}^\eps \,\frac{\cM_{ie}}{h_{ie}^\eps } \left|\partial_vh_{ie}^\eps \right|^2\right]\dD v \,\dD x\,.
\end{eqnarray*}
Integrating over $[0,t]$ and keeping only the two dominant terms on the right hand side, yields the entropy estimate with the corresponding dissipation.
\end{proof}

Now, let us formally derive the asymptotic model when $\eps\rightarrow 0$.

\begin{theorem}{\bf(Asymptotic limit)}
  Suppose that for each $\eps >0$, $(f^\eps_i,f^\eps_e,\phi^\eps)$ is
  a  solution to the coupled system \eqref{eq:kin}-\eqref{constr-phi} satisfying \eqref{relationCollisionFrequencyAdimentional}, \eqref{init:0}-\eqref{init:ener}, \eqref{init:1} and
  $$
  \iint_{\mathbb{T} \times\mathbb{R}} f_e^\eps(t=0,x,v)\, \dD v \,\dD x\,=\,\iint_{\mathbb{T} \times\mathbb{R}} f_i^\eps(t=0,x,v)\, \dD v \,\dD x\,=\,{\mathfrak N}\,,
  $$
  for some fixed ${\mathfrak N}>0$. Furthermore, for any final time
  $T_{\rm end}>0$, let us assume that the family defined by the
  macroscopic quantities
  $\bfU^\eps_\alpha=(n^\eps_\alpha,u^\eps_\alpha,T^\eps_\alpha)$ for
  $\alpha\in\{e,\, i\}$ are relatively compact in $L^1(0,T_{\rm end},
  L^\infty(\TT))$.  Then, when $\eps$ tends towards zero, the
  distribution function $f_e^\eps$ tends towards a local Maxwellian $f_e^0$ of
  the form (Maxwell-Boltzmann distribution) 
  \be
  \label{def:MB}
  \left\{
    \begin{array}{l}
\ds f_e^0(t,x,v) \,=\,  { n_e^0(t,x) \over \sqrt{2\, \pi\, T_e^0(t)}} \,
      \exp\left(-\frac{v^2}{2 T_e^0(t)}\right)\,,
      \\
      \ds n_e^0(t,x)\,=\,c^0(t)\,
      \exp\left(\frac{\phi^0(t,x)}{T_e^0(t)}\right)\,,
      \end{array}\right.
\ee
where $c^0(t)$ is given such that
\be
\label{def:ct}
\int_{\TT} n_e^0(t,x) \,\dD x \,=\, {\mathfrak N},\qquad \forall \;t \geq 0\,,
\ee
whereas $(\phi^0, T_e^0)$ is such that $T_e^0(t)$ only depends on the time-variable, and is computed via the energy conservation
\be
\label{def:Te}
\frac{{\mathfrak N}}{2}\, T_e^0(t) \,+\,\frac{1}{2}\,\int_{\TT}
|\partial_x \phi^0(t,x)|^2 \dD x \,+\, \frac{1}{2}\,\iint_{\TT\times\RR} f_i^0(t,x,v) \,v^2\dD v \dD x
=\, {\mathfrak E},\qquad \forall t\geq 0
\ee
and $\phi^0$ is solution to {the nonlinear Poisson-Boltzmann equation}
\be
\label{def:phi}
  - \partial_{xx} \phi^0 \,+\, c^0(t)\,\exp\left( \frac{\phi^0}{T_e^0(t)}\right)\,=\, n_i^0\,, \qquad  E^0\,=\,-\partial_x \phi^0\,,
        \ee
supplied with the additional constraint
  \be
\label{cstr}
\int_{\TT}   \phi^0(t,x)\, \dD x \,=\,0\,, \qquad \forall t\ge0\,.
\ee
The ion distribution function $f_i^0$ satisfies then the
Vlasov-Fokker-Planck equation
\be
\left\{
 \begin{array}{ll}
      \displaystyle \partial_t f_i^0 \,+\,  v \, \partial_x f_i^0 \,+\,
   E^0\,  \partial_v f_i^0 \,=\,   \cQ_{ii}(f_i^0,f_i^0)\,,
   \\[1.1em]
  \ds n_i^0 = \int_\RR f_i^0 \dD v\,. 
     \end{array}
  \right.
  \label{eq:lim2}
  \ee
\label{th:1} 
\end{theorem}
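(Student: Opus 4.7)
The plan is a formal Hilbert-type expansion combined with the dissipation estimate of Proposition \ref{prop:3}; the compactness hypothesis on the moments $\bfU^\eps_\alpha$ is used throughout to pass to the limit in nonlinear moment identities. First I would extract the Maxwellian structure of $f_e^0$ from Proposition \ref{prop:3}: since $\cH_e(t)+\cH_i(t)$ is controlled by its initial value, the two dissipation terms scaled by $1/\eps$ must satisfy
\[
\int_0^{T_{\rm end}}\!\!\iint_{\TT\times\RR}\frac{\cM_e}{h_e^\eps}|\partial_v h_e^\eps|^2\,\dD v\,\dD x\,\dD s = O(\eps), \quad \int_0^{T_{\rm end}}\!\!\iint_{\TT\times\RR}\frac{\cM_{ei}}{h_{ei}^\eps}|\partial_v h_{ei}^\eps|^2\,\dD v\,\dD x\,\dD s = O(\eps).
\]
Passing to the limit along the extracted subsequence yields $\partial_v h_e^0=0$ and $\partial_v h_{ei}^0=0$, so that $f_e^0 = \cM_e^0 = \cM_{ei}^0$ (the multiplicative constants are fixed by matching zeroth moments since both Maxwellians have density $n_e$). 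Matching the two Gaussians, combined with the $\eps\to 0$ limits $u_{ei}^0 = u_e^0/2$ and $T_{ei}^0 = T_e^0 + |u_e^0|^2/(2d)$ coming from \eqref{defuieAdim}--\eqref{defThetaeiieAdim}, forces $u_e^0 \equiv 0$ and $T_{ei}^0 = T_e^0$. Thus $f_e^0$ is a Maxwellian centered at zero mean velocity, as in the first line of \eqref{def:MB}.

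Next I would determine the $x$-dependence of $n_e^0$ and $T_e^0$ by taking moments of the electron line of \eqref{eq:kin}. Testing against $v$ and letting $\eps\to 0$ yields, using $u_e^0=0$ and the Maxwellian closure $p_e^0 = n_e^0 T_e^0$, the leading-order momentum balance
\[
\partial_x(n_e^0 T_e^0) + n_e^0 E^0 = 0, \qquad E^0 = -\partial_x \phi^0.
\]
Testing next against $v^3$ and using the Gaussian moments $\int v^3 \cM_e^0\,\dD v = 0$, $\int v^4 \cM_e^0\,\dD v = 3\,n_e^0 (T_e^0)^2$, $\int v^2\cM_e^0\,\dD v = n_e^0 T_e^0$, together with the vanishing of $\int v^3 \cQ_{ei}(\cM_e^0,f_i^0)\,\dD v$ (all relevant odd moments vanish because $u_e^0=u_{ei}^0=0$), one obtains at leading order
\[
3\,T_e^0\,\bigl[\partial_x(n_e^0 T_e^0) + n_e^0\partial_x T_e^0 + n_e^0 E^0\bigr] = 0.
\]
Substituting the momentum balance collapses this to $n_e^0 T_e^0\,\partial_x T_e^0 = 0$, which forces $\partial_x T_e^0\equiv 0$, i.e.\ $T_e^0 = T_e^0(t)$. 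Integrating the momentum balance in $x$ then produces the Boltzmann relation $n_e^0(t,x)=c^0(t)\exp(\phi^0(t,x)/T_e^0(t))$ of \eqref{def:MB}, with $c^0(t)$ fixed by the mass constraint \eqref{def:ct} (itself inherited from the exact conservation of $\int n_e^\eps\,\dD x$ for every $\eps>0$).

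Finally, passing to the limit in the ion line of \eqref{eq:kin} is immediate because the prefactor $\eps$ kills the $\cQ_{ie}$ contribution, giving \eqref{eq:lim2}; the limit of Poisson's equation \eqref{poisson} combined with the Boltzmann relation produces the nonlinear Poisson--Boltzmann equation \eqref{def:phi}, and \eqref{cstr} is inherited from \eqref{constr-phi}. The electron temperature $T_e^0(t)$ is then recovered from Proposition \ref{prop:2}: using $u_e^0=0$, the uniform $T_e^0(t)$ and \eqref{def:ct}, one computes $K_e(t) = \tfrac{1}{2}\int_{\TT} n_e^0\,T_e^0\,\dD x = \tfrac{\mathfrak N}{2}\,T_e^0(t)$, and the identity $U(t)+K_e(t)+K_i(t)=\mathfrak E$ rearranges into \eqref{def:Te}. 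The main obstacle is the rigorous passage to the limit in the nonlinear $v$- and $v^3$-moment identities, which is exactly where the assumed compactness of $\bfU^\eps_\alpha$ in $L^1(0,T_{\rm end};L^\infty(\TT))$ is invoked; a purely kinetic Chapman--Enskog route, expanding $f_e^\eps = \cM_e^0 + \eps f_e^1 + \cdots$ and controlling $f_e^1$ via the linearised Fokker--Planck operator, would be an alternative but significantly more technical path.
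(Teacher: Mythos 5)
Your proposal is correct and follows essentially the same route as the paper: entropy dissipation forces $f_e^0$ to be simultaneously the Maxwellian $\cM_{n_e^0,u_e^0,T_e^0}$ and $\cM_{n_e^0,u_{ei}^0,T_{ei}^0}$, whence $u_e^0=0$; the limiting stationary electron transport equation then yields $\partial_x T_e^0=0$ and the Boltzmann relation; and the conservation laws fix $c^0(t)$ and $T_e^0(t)$. Your extraction via the $v$- and $v^3$-moments is just the integrated version of the paper's pointwise comparison of the coefficients of $v$ and $v^3$ in the identity $v\,\partial_x\cM - E^0\partial_v\cM=0$, so the two arguments are interchangeable. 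One small imprecision: the prefactor $\eps$ does \emph{not} by itself kill the $\cQ_{ie}$ contribution in the ion equation, since $\cQ_{ie}$ contains the drift $u_{ie}/\eps$; the term $-\nu_{ie}\,\tfrac{u_e^0}{2}\,\partial_v f_i^0$ survives the limit (as in the paper's intermediate system) and disappears only once $u_e^0=0$ has been established from the entropy argument.
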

\begin{proof}
On one hand, thanks to the uniform bounds established in Proposition \ref{prop:2} and \ref{prop:3} on the total energy and the entropy, one can show that up to a subsequence we have for $\alpha\in\{e,\,i\}$
  $$
f^\eps_\alpha \rightharpoonup f_\alpha^0 \quad\textrm{ weakly-$\star$ in }\,  L^{\infty}\left(0,T_{\rm end}, L^1(\TT\times\RR)\right),\quad\textrm{as}\,\,\, \eps\rightarrow 0\,.
  $$
On the other hand, from the assumption on the densities $n_e^\eps$ and
$n_i^\eps$ and using Poisson's equation \eqref{poisson}, we can show that up to
a subsequence,  $E^\eps$ converges to $E^0$ strongly in $L^1(0,T_{\rm
  end},L^\infty(\TT))$. Together with the weak-$*$ convergence of the
distribution function $(f^\eps_i)_{\eps>0}$ and $(f^\eps_e)_{\eps>0}$
in $L^{\infty}\left(0,T_{\rm end}, L^1(\TT\times\RR)\right)$, this
yields for $\alpha\in\{i,\,e\}$,  and for any test function $\varphi\in
\cC^\infty_c([0,T_{\rm end})\times\TT\times \RR)$,
  $$
\int_0^{T_{\rm end}}\iint_{\TT\times\RR} E^\eps\, f_\alpha^{\eps}
\,\partial_v\varphi \,\dD x\,\dD v \,\dD t \,\rightarrow\, \int_0^{T_{\rm
    end}}\iint_{\TT\times\RR}  E^0\,f_\alpha^0 \,\partial_v\varphi \,\dD
x\,\dD v \,\dD t, \quad\textrm{ as }\,\eps\rightarrow 0\,.
$$
Furthermore, using that $(n^\eps_\alpha,u^\eps_\alpha,T^\eps_\alpha)_{\eps>0}$ is
relatively compact in $L^1(0,T_{\rm end},L^\infty(\TT))$, we also get
that $\cQ_{\alpha\beta}(f_\alpha^\eps,f_\beta^\eps)$ weakly converges
to  $\cQ_{\alpha\beta}(f_\alpha^0,f_\beta^0)$. Altogether we have thus that the limit
$(f_e^0,f_i^0)$ is a solution to the following system
\begin{equation}
  \label{tmp2}
\left\{
 \begin{array}{ll}
      \displaystyle \partial_t f_i^0 +   v\, \partial_x f_i^0 +  E^0 \, \partial_v f_i^0 \,=\,    \cQ_{ii}(f_i^0,f_i^0) - \nu_{ie} {u_e^0 \over 2} \,\partial_v f_i^0\,, \\[1.1em]      
      \displaystyle v\, \partial_x f_e^0 -  \, E^0\, \partial_v f_e^0 =  \cQ_{ee}(f_e^0,f_e^0) + \cQ_{ei}(f_e^0,f_i^0)\,,
    \end{array}
  \right.  
\end{equation}
coupled with Poisson's equation for the potential $\phi^0$
$$
- \partial_{xx} \phi^0 \,=\, n_i^0 \,-\, n_e^0\,, \qquad E^0 \,=\, - \partial_x \phi^0\,,
$$
with the constraint of zero average
$$
\int_{\TT} \phi^0(t,x)\, \dD x \,=\,0\,, \quad \forall t\,>\,0\,.
$$
It remains to check that in the $\eps \rightarrow 0$ limit $u_e^0=0$ and $f_e^0$ is of the form of a local Maxwellian. Indeed, in view of the entropy dissipation given in Proposition \ref{prop:3} and the conservation of mass, we obtain
$$
f_e^0 \,=\,\cM_{n_e^0,u_e^0,T_e^0}\,\quad\textrm{and}\quad  f_e^0 \;=\,\cM_{n_e^0,u_{ei}^0,T_{ei}^0}\,,
$$
where $u_{ei}^0=u_e^0/2$ and $T_{ei}^0=T_e^0 + |u_e^0|^2/2$ are obtained by passing to the limit in \eqref{defuieAdim}-\eqref{defThetaeiieAdim}. 
Hence this yields that $u_e^0=0$ and  that $f_e^0$ is a local Maxwellian of the form
$$
f_e^0 (t,x,v)=\cM_{n_e^0,0,T_e^0}\,,
$$
whereas the unknowns $(n_e^0,T_e^0)$ are still to be determined.
Finally, inserting now the Maxwellian $f_e^0=\cM_{n_e^0,0,T_e^0}$ in the second equation of \eqref{tmp2}, yields
$$
v\, \partial_x \cM_{n_e^0,0,T_e^0} \,-\, E^0\, \partial_v \cM_{n_e^0,0,T_e^0} =0\,,
$$
or equivalently
$$
v \left[ {\partial_x n_e^0 \over n_e^0} \,+\, \left( {v^2 \over 2\, T_e^0} -{1 \over 2} \right){\partial_x T_e^0 \over T_e^0}  \right]  \,-\, v\, {\partial_x \phi^0 \over T_e^0}\,=\,0\,.
$$
This permits to get the asymptotic model \eqref{def:MB}-\eqref{eq:lim2}, by comparing the terms of the same order in $v$.
\end{proof}

It is for the moment not clear if it is possible to perform this
formal proof even at the fluid level, meaning starting from the
electron moment equations \eqref{fluid}. It seems that only the
kinetic framework, especially the powerful H-theorem, permits to
obtain that in the limit the mean electron velocity vanishes $u_e
\equiv 0$ and that the temperature $T_e^0(t)$ is only time-dependent.
\\
{\begin{remark} \label{remark_L}
The Limit-model \eqref{def:MB}-\eqref{eq:lim2} has an equivalent formulation given by
\be \label{L_bis}
(L)'\,\,\, \left\{
\begin{array}{l}
  \displaystyle \partial_t f_i^0 \,+\,  v \, \partial_x f_i^0 \,+\,
   E^0\,  \partial_v f_i^0 \,=\,  \nu_{ii}\, \partial_v \left[ (v-u_i^0)\, f_i^0+ T_i^0\, \partial_v f_i^0\right]\,,\\[3mm]
     \displaystyle   v \, \partial_x f_e^0 \,-\,
   E^0\,  \partial_v f_e^0 \,=\,  (\nu_{ee} + \nu_{ei}) \, \partial_v \left[ v\, f_e^0+ T_e^0\, \partial_v f_e^0\right]\,,\\[3mm]
  \displaystyle   - \partial_{xx} \phi^0 \,=\, n_i^0-n_e^0\,, \qquad  E^0\,=\,-\partial_x \phi^0\,,\\[3mm]
   \displaystyle \int_{\TT} n_e^0(t,x) \,\dD x \,=\, \int_{\TT} n_i^0(t,x) \,\dD x \,=\,{\mathfrak N},\qquad \forall \;t \geq 0\,, \\[3mm]
   \displaystyle \frac{1}{2}\,\iint_{\TT\times\RR} f_e^0(t,x,v) \,v^2\dD v \dD x \,+\,\frac{1}{2}\,\int_{\TT}
|\partial_x \phi^0(t,x)|^2 \dD x \,+\, \frac{1}{2}\,\iint_{\TT\times\RR} f_i^0(t,x,v) \,v^2\dD v \dD x
=\, {\mathfrak E}\,.

\end{array}
\right.
\ee
\end{remark}}
\vspace{0.2cm}

The next theorem certifies the existence and uniqueness of a solution of the just obtained asymptotic model \eqref{def:MB}-\eqref{eq:lim2}, for a given ion distribution function $f_i^0$.
\begin{theorem} {\bf{(Well posedness of the asymptotic model)}}
  Let us fix the total number of  electrons ${\mathfrak N}>0$ and the initial energy ${\mathfrak E}>0$. Furthermore, assume that the ion distribution function $f_i$ is known, sufficiently smooth and such that $(n_i,w_i) \in L^\infty(\RR^+;L^1(\mathbb{T};\RR^+))^2$, satisfying
  $$
  \int_\mathbb{T} n_i \,\dD x\,=\;{\mathfrak N}\,.
  $$
  Then there exists a unique solution $(\phi,T_e) \in
  L^\infty(\RR^+;H^1(\mathbb{T})) \times L^\infty(\RR^+)$ to the
  non-linear elliptic problem \eqref{def:ct}-\eqref{cstr}.
Moreover, if $n_i \in L^\infty(\RR^+;L^2(\mathbb{T}))$ one gets  $\phi \in L^\infty(\RR^+;H^2(\mathbb{T}))$.
\end{theorem}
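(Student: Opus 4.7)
The strategy is to work pointwise in $t\ge 0$: the ion moments $n_i(t,\cdot)$ and $w_i(t,\cdot)$ enter the elliptic system only as parameters, and the $L^\infty(\RR^+)$ memberships will follow from uniform-in-$t$ bounds on those data. I would decouple the nonlinear system \eqref{def:ct}-\eqref{cstr} into two subproblems: (i) for each fixed $T_e>0$, solve the Poisson-Boltzmann equation \eqref{def:phi} together with the mass constraint \eqref{def:ct} and the gauge \eqref{cstr}, producing a unique potential $\phi[T_e]$; (ii) determine $T_e$ through the scalar energy-conservation relation \eqref{def:Te}.

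For step (i), I would introduce on $V:=\{\phi\in H^1(\TT):\,\int_\TT\phi\,\dD x=0\}$ the functional
$$
J_{T_e}(\phi)\,:=\,\tfrac12\int_\TT|\partial_x\phi|^2\,\dD x\,-\,\int_\TT n_i\,\phi\,\dD x\,+\,{\mathfrak N}\,T_e\,\ln\!\int_\TT e^{\phi/T_e}\,\dD x\,.
$$
Its Euler-Lagrange equation coincides with \eqref{def:phi} once one identifies $c^0[T_e]={\mathfrak N}/\!\int_\TT e^{\phi[T_e]/T_e}\,\dD x$, so \eqref{def:ct} is automatically satisfied. The functional is strictly convex (strict convexity of the log-sum-exp term follows from H\"older's inequality, together with the fact that on $V$ two functions differing by a constant must coincide), continuous, and coercive on $V$: Jensen's inequality on the probability measure $\dD x/|\TT|$ gives $\ln\!\int e^{\phi/T_e}\,\dD x\ge\ln|\TT|$, while the one-dimensional embedding $H^1(\TT)\hookrightarrow L^\infty(\TT)$ and Poincar\'e-Wirtinger yield $|\int n_i\,\phi\,\dD x|\le C{\mathfrak N}\|\partial_x\phi\|_{L^2}$. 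The direct method then delivers a unique minimizer $\phi[T_e]\in V$.

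For step (ii), set $G(T_e):=\tfrac12\|\partial_x\phi[T_e]\|_{L^2(\TT)}^2$ and $H(T_e):={\mathfrak N}\,T_e/2+G(T_e)$, so that \eqref{def:Te} becomes the scalar equation $H(T_e)={\mathfrak E}-\int_\TT w_i(t,x)\,\dD x$. Uniform strong convexity of $J_{T_e}$ (the modulus is independent of $T_e$) together with continuity of $T_e\mapsto J_{T_e}(\phi)$ makes $T_e\mapsto\phi[T_e]$ continuous in $H^1(\TT)$, hence $H$ continuous on $(0,\infty)$. Testing \eqref{def:phi} against $\phi[T_e]$ yields a uniform bound $G(T_e)\le C{\mathfrak N}^2$ independent of $T_e$, so $H(T_e)\to+\infty$ as $T_e\to+\infty$, while the expected quasi-neutral behaviour $\phi[T_e]\to 0$ (with $n_e\to n_i$) as $T_e\to 0^+$ gives $H(T_e)\to 0$. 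The intermediate value theorem then produces $T_e^\ast>0$ solving $H(T_e^\ast)={\mathfrak E}-\int w_i\,\dD x$, under the physically natural compatibility condition ${\mathfrak E}>\int_\TT w_i(t,x)\,\dD x$. For uniqueness, the crux is strict monotonicity of $H$: differentiating \eqref{def:phi} formally in $T_e$ yields a coercive linear elliptic equation for $\psi:=\partial_{T_e}\phi[T_e]$ (of linearized Poisson-Boltzmann type), from which $G'(T_e)\ge 0$, equivalently $H'(T_e)\ge{\mathfrak N}/2>0$, can be extracted by suitable testing.

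For the improved regularity under $n_i\in L^\infty(\RR^+;L^2(\TT))$, the one-dimensional embedding $H^1(\TT)\hookrightarrow L^\infty(\TT)$ immediately gives $n_e^0={\mathfrak N}\,e^{\phi/T_e}/\!\int e^{\phi/T_e}\in L^\infty(\TT)$, so \eqref{def:phi} reads $-\partial_{xx}\phi=n_i-n_e^0\in L^2(\TT)$ and standard elliptic regularity on the torus upgrades $\phi$ to $H^2(\TT)$. Uniform-in-$t$ bounds transfer from the hypotheses $n_i,w_i\in L^\infty(\RR^+;L^1(\TT))$ since every constant above depends only on ${\mathfrak N}$ and on $\|w_i(t,\cdot)\|_{L^1}$. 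The principal technical obstacle is the strict monotonicity of $H$ required for uniqueness of $T_e^\ast$, which reduces to the positivity analysis of the linearized Poisson-Boltzmann operator at $\phi[T_e]$.
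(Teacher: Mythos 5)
Your strategy is exactly the paper's: minimize the same convex functional on the zero-average subspace of $H^1(\TT)$ to get $\phi[T]$ for each fixed $T$, obtain a $T$-independent $H^1$ bound by testing the equation with $\phi[T]$, and then close the problem by showing the scalar energy map $T\mapsto \frac{\mathfrak N}{2}T+\frac12\|\partial_x\phi[T]\|_{L^2}^2$ is continuous, strictly increasing, tends to $+\infty$, and tends to $0$ as $T\to0^+$. However, the two steps you leave open are precisely where the paper does its real work, and neither is as routine as your sketch suggests. For the limit $T\to0^+$, "expected quasi-neutral behaviour" is not an argument: the paper switches to the shifted unknown $\psi=\phi+K_T$ normalized by $\int_\TT e^{\psi/T}\,\dD x=1$, tests with $\psi$, and uses the elementary inequalities $e^x\ge x$ for $x\ge0$ and $-x e^x\le e^{-1}$ for $x<0$ to get $\|\partial_x\psi\|_{L^2}^2\le \mathfrak N\|\psi\|_{L^\infty}-\frac{\mathfrak N}{T}\|\psi\|_{L^2}^2+\mathfrak N\,T\,e^{-1}|\TT|$, which forces $\|\partial_x\psi\|_{L^2}\to0$; you should supply something of this kind. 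For strict monotonicity, the paper justifies differentiability of $T\mapsto\psi_T$ by solving the linearized (coercive) problem and invoking uniqueness, and then computes explicitly
\begin{equation*}
\frac{\dD}{\dD T}{\mathcal E}(T)=\frac{\mathfrak N}{2}+\frac{T^2}{\mathfrak N}\int_{\TT}e^{-\psi_T/T}\,\bigl|\partial_{xx}(\partial_T\psi_T)\bigr|^2\,\dD x+T\int_{\TT}\bigl|\partial_{xT}\psi_T\bigr|^2\,\dD x\;\ge\;\frac{\mathfrak N}{2}>0\,,
\end{equation*}
which is the "suitable testing" you allude to but do not perform. Finally, you are right that the compatibility condition ${\mathfrak E}\ge\int_\TT w_i\,\dD x$ is needed for the intermediate value argument; in the paper this is guaranteed because ${\mathfrak E}$ is the conserved total energy, which dominates the ion kinetic energy, rather than an arbitrary positive constant.
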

\begin{proof}
Let us  first introduce the space 
$$
{\mathcal H}\,:=\, \{\, g\in H^1(\mathbb{T}) \,\, / \,\,  \int_\mathbb{T}
g(x)\, \dD x \,=\,0\,\}\,.
$$
Observe that in the limit problem \eqref{def:ct}-\eqref{cstr}, the time $t
\in \RR^+$ is simply a parameter, hence we fix and
drop it in the sequel.  
The proof of this theorem is based on the construction and study of the map ${\mathcal E}:\RR^+ \rightarrow \RR^+$, given for any $T \in \RR^+$ by
$$
{\mathcal E}(T):={{\mathfrak N} \over 2}\, T \,+\, {1 \over 2}\,
\int_\mathbb{T}  \left| \partial_x \phi_T(t,\cdot) \right|^2 \dD x
\,+\, \int_\mathbb{T} w_i (t,\cdot) \,\dD x\,,
$$
where $\phi_T$ is the solution to  non-linear elliptic equation
\eqref{def:phi} with conditions \eqref{def:ct}
and \eqref{cstr}, and with $T_e \,\equiv\, T>0$.

The aim is now to show that there exists $T_\star >0$ such that ${\mathcal
  E}(T_\star)={\mathfrak E}$. This shall be done in several steps: first we prove that for any $T>0$
there exists a smooth potential, denoted $\phi_T$, to \eqref{def:phi}
with conditions \eqref{def:ct} and \eqref{cstr}, hence that ${\mathcal
  E}(T)$ is a well-defined mapping; secondly we shall prove that this
mapping ${\mathcal E}$ is continuous, nondecreasing with ${\mathcal
  E}(0) \leq {\mathfrak E}$ and
$$
\lim_{T\rightarrow \infty} {\mathcal
  E}(T)=\infty.
$$

\paragraph{\bf Step 1 :  Study of the map $T>0 \mapsto \phi_T \in  {\mathcal H} $ for fixed $t \in \RR^+$.}

For a given $T>0$ standard
elliptic theory permits to show the existence and uniqueness of a
solution $\phi_T \in {\mathcal H} $ to the non-linear elliptic
equation \eqref{def:phi} with conditions
\eqref{def:ct} and \eqref{cstr}. This is based on the minimization of the strictly convexe, differentiable functional on the space ${\mathcal H}$
$$
{\mathcal L}(\phi)\,:=\,{1 \over 2}\, \int_\mathbb{T}  \left|
  \partial_x \phi \right|^2 \dD x \,+\, {\mathfrak N}\, T\,  \ln
\left(  \int_\mathbb{T} e^{\phi/T}\, \dD x \right) - \int_\mathbb{T}
n_i\, \phi\, \dD x\,.
$$
Remark that one has the compact injection $H^1(\mathbb{T}) \subset C(\overline{\mathbb{T}})$, such that all terms in this expression are well-defined.\\
To get some estimates on $\phi_T$, let us multiply the second equation
of \eqref{def:phi} by $\phi_T$ and integrate in space, to obtain
$$
\begin{array}{lll}
\ds \| \partial_x \phi_T\|^2_{L^2(\mathbb{T})} &=& \ds
                                                   \int_\mathbb{T}
                                                   n_i\, \phi_T\,
                                                   \dD x  \,-\, c(t)
                                                   \,\int_{\TT}
                                                   e^{\phi_T /
                                                   T}\, \phi_T\, \dD x
  \\[1.1em]
&\le & \ds{{\mathfrak N}} \, \|\phi_T\|_{L^\infty(\mathbb{T})}  \,+\,
       {\mathfrak N}\,\|\phi_T\|_{L^\infty(\mathbb{T})}
  \\[1.1em]
&\le&\ds  C\,\|\phi_T\|_{H^1(\mathbb{T})}\,,
\end{array}
$$
such that via Poincar\'e's inequality one gets that $\phi_T$ is bounded in $H^1(\mathbb{T})$ independently on $T$.

\paragraph{\bf Step 2 : Study of the map $T \mapsto {\mathcal
    E}(T)$ for fixed $t \in \RR^+$.}

To continue the proof, it will be simpler to introduce in this step the auxiliary unknown $\psi \in H^1(\mathbb{T})$, solution of the problem
\be
\label{NL_el_bis}
  \left\{
  \begin{array}{l}
\ds -\partial_{xx} \psi \,+\, {\mathfrak N}\,  e^{\psi / T} \,=\,
    n_i\,,
    \\[
    1.1em]
\ds  {{\mathfrak N} \over 2} \,T \,+\, {1 \over 2}\, \int_\mathbb{T}
    \left| \partial_x \psi \right|^2 \dD x \,+\,
    \int_\mathbb{T} w_i  \,\dD x\,=\, {\mathfrak E}\,,
\end{array}
\right.
  \ee
associated with periodic boundary conditions and the different constraint
\be
\label{cstr_bis}
\int_{\mathbb T} e^{\psi / T} \, \dD x \,=\, 1\,,
\ee
and to remark that both problems \eqref{def:ct}-\eqref{cstr} and
\eqref{NL_el_bis}-\eqref{cstr_bis} are completely equivalent.  More
precisely, we have the following relation  $\psi=\phi\,+\,K_T$ with
$$
K_T \,:=\, - T\, \ln \left(\int_{\mathbb T} e^{\phi / T}
  \, \dD x \right)\,,
$$
respectively $\phi=\psi\,+\,C_T$ with
$$
C_T\,:=\,-\int_{\mathbb T} \psi \, \dD x\,,
$$ permitting to pass from one problem to the other.
\\

Let us observe that the introduction of the auxiliary unknown $\psi_T$ does not change the map ${\mathcal E}(T)$ which writes now
\be
\label{appli_E}
{\mathcal E}:
\begin{array}{ll}
  \RR^+ & \mapsto \RR^+
          \\
  T & \ds\rightarrow  {{\mathfrak N} \over 2}\, T \,+\, {1 \over 2}\,
      \int_\mathbb{T}  \left| \partial_x \psi_T \right|^2 \dD
      x
      + \int_\mathbb{T} w_i \,\dD x\,.
\end{array}
      \ee
One can show that this application is strictly increasing and continuous in $T$, and that 
$$
{\mathcal E}(T) \rightarrow_{T \rightarrow \infty} \infty\,, \quad
{\mathcal E}(T)\rightarrow_{T \rightarrow 0}  \int_{\mathbb T} w_i \dD
x \,\le {\mathfrak E}\,.
$$
The $T \rightarrow \infty$ limit is obvious  from \eqref{appli_E}. The
continuity of ${\mathcal E}$ is based on the continuity of the map
$T\mapsto \psi_T \in H^1(\mathbb{T})$. To show this, we fix $T>0$ and hence $ \psi_T \in H^1(\mathbb{T})$ and consider the linear problem
\be
\label{theta}
-\partial_{xx} \theta \,+\,  {{\mathfrak N} \over T} \,  e^{\psi_T
  / T}\, \theta \,=\,  {{\mathfrak N} \over T^2} \,  e^{\psi_T / T} \psi_T\,,
\ee
associated with periodic boundary conditions, problem which admits a unique solution $\theta \in  H^2(\mathbb{T})$. Now, one observes that differentiating (in the distributional sense) the first equation in \eqref{NL_el_bis} with respect to $T$ yields
\be
\label{pb_psi_T}
-\partial_{xx} \left( \partial_T \psi_T \right) \,=\,  {\mathfrak N}\,  e^{\psi_T \over T}\, \left( {\psi_T \over T^2} -{ \partial_T \psi_T / T} \right)\,,
\ee
which is nothing but problem \eqref{theta}. By uniqueness one has then the existence of $ \partial_T \psi_T  \in H^2(\mathbb{T})$.\\
The continuity and monotonicity of ${\mathcal E}$ is shown by computing its derivative with respect to $T$, namely
$$
{\dD \over \dD T} {\mathcal E}(T) \,=\, {{\mathfrak N} \over 2} \,+\,
\int_\mathbb{T}   \partial_x \psi_T \,  \, \partial_{xT} \psi_T \, \dD
x \,=\, {{\mathfrak N} \over 2} - \int_\mathbb{T}    \psi_T \,  \,
\partial_{xx} \left( \partial_T \psi_T \right) \, \dD x \,.
$$
Inserting in this last formula $\psi_T$ obtained from \eqref{pb_psi_T}, yields
$$
{\dD \over \dD T} {\mathcal E}(T)\,=\, {{\mathfrak N} \over 2} \,+\,
{T^2 \over{\mathfrak N} } \,\int_{\mathbb T} e^{-\psi_T / T} |
\partial_{xx}\left( \partial_T \psi_T \right) |^2 \, \dD x \,+\, T
\,\int_{\mathbb T}| \partial_{xT} \psi_T  |^2 \, \dD x \ge 0\,.
$$
Finally, to show the limit in $T=0$ we shall investigate in more details the dependence of $\psi_T$ on $T$. For this, multiplying the equation  \eqref{NL_el_bis}  with $\psi$ and integrating in space, yields
$$
\| \partial_x \psi \|^2_{L^2(\mathbb{T})} \,+\,  {{\mathfrak N}}\,
\int_\mathbb{T} e^{\psi  / T}\, \psi\, \dD x \,=\, \int_\mathbb{T}
n_i\, \psi\, \dD x\,,
$$
thus
$$
\begin{array}{lll}
\ds \| \partial_x \psi\|^2_{L^2(\mathbb{T})} &=& \ds  \int_\mathbb{T}
                                                 n_i\, \psi\, \dD x -
                                                 {{\mathfrak N}}\,T\,
                                                 \int_{\psi \ge 0}
                                                 e^{\psi / T}\,
                                                 {\psi\over T}\, \dD x
                                                 - {{\mathfrak N}}\,T \,
                                                 \int_{\psi<0} e^{\psi
                                                 / T}\, {\psi
                                                 \over T}\, \dD x \\[1.1em]
&\le & \ds{{\mathfrak N}} \, \|\psi\|_{L^\infty(\mathbb{T})} - { {{\mathfrak N}}\over T} \, \| \psi\|^2_{L^2(\mathbb{T})} \,+\, {{\mathfrak N}}\, T\, e^{-1}\, |\mathbb{T}|\,,
\end{array}
$$
where we used the fact that $e^x \ge x$ for positive $x$, whereas
$-x\, e^x \le e^{-1}$ for negative $x$. This implies $\| \partial_x
\psi \|^2_{L^2(\mathbb{T})} \rightarrow_{T \rightarrow 0} 0$, as $\|\psi\|_{H^1(\mathbb{T})} \le C$ with a constant $C>0$ independent on $T$.

Therefore, by applying Rolle's theorem, there exists a  unique $T_\star>0$ such that ${\mathcal E}(T_\star)= {\mathfrak E}$ and a unique associated $\phi_{T_\star} \in {\mathcal H}$, solution to
\eqref{def:phi} with \eqref{def:ct} and \eqref{cstr}.

This concludes the proof for fixed $t \in \RR^+$ and we shall denote
$T(t):=T_\star$. We remark additionally that $\phi_{T_\star} \in
W^{2,1}(\mathbb{T})$ and for more regular data, namely for
$n_i(t) \in L^2(\mathbb{T})$, one has even $\phi_{T_\star}(t) \in
{\mathcal H} \cap H^2(\mathbb{T})$.

Finally, the regularity in time comes now from the fact that $t$ is only a
parameter in the second equation of \eqref{def:phi}. Indeed, from the
first step,  we know that $\phi_T$ is bounded in $H^1(\mathbb{T})$ independently on $T$, and the energy conservation in \eqref{def:Te} yields the boundedness of $T(t)$, concluding the proof.
\end{proof}

\section{Numerical scheme}
\label{sec:4}
\setcounter{equation}{0}
\setcounter{figure}{0}
\setcounter{table}{0}

This section is devoted to the first steps towards the construction of a numerical scheme for
the Vlasov-Poisson-Fokker-Planck system
\eqref{eq:kin}-\eqref{constr-phi}  based on a direct discretization in
the $(x,v)$ phase-space (Eulerian approach). In the velocity space, we
shall make use of a complete, orthonormal Hermite basis to approach
the distribution functions \cite{Filbet2020,Filbet2022}. For the space
discretization a discrete Galerkin method is applied
for both transport  and  Poisson equations \cite{Ayuso2012,Filbet2020,Filbet2022}. The
use of a Hermite spectral method to discretize the velocity variable
is motivated by the fact that this method, if well scaled, is able to
reduce drastically the computational costs in situations where a
kinetic-fluid transition is investigated, thus in particular when
dealing with our adiabatic limit $\eps \rightarrow 0$. {As
  mentioned in the introduction the here presented scheme is only a
  first step towards a fully performant numerical method we shall
  present in a forthcoming work \cite{CF_bis}. In this section, we focus solely on the
  introduction of well-designed Hermite basis functions in the
  velocity space, permitting to gain considerable time in the electron
  dynamic resolution when $\eps \ll 1$, and this due to the
  possibility to reduce dynamically the number of Hermite-modes taken
  into account.} \cla{The time stiffness is a second problem, which needs
special attention and will be dealt with in a second work \cite{CF_bis}.}

\subsection{Hermite expansion for the electron system}
\label{sec:4.1}
We shall start by supposing in this subsection that the ion dynamics is known, with smooth macroscopic quantities $(n_i,u_i,T_i)$, and shall present a numerical scheme solely for the resolution of the electron  Vlasov-Poisson-Fokker-Planck system
\be
\label{Ele_syst}
\left\{
 \begin{array}{l}
  \displaystyle \eps\,\partial_t f \,+\, v\, \partial_x f \,-\, E\, \partial_v f =  \cQ(f)\,,
   \\
   [1.1em]
 \ds - \partial_{xx} \phi = n_i - n, \qquad E = - \partial_x \phi\,,
    \end{array}
  \right.
\ee
with $\cQ(f)$ the mixed, non-linear Fokker-Planck operator
$$
\cQ(f) \,=\, \nu_{ee}\, \partial_v\left[ (v- u)\, f+ T\, \partial_v f \right] + \nu_{ei}\,\partial_{{v}} \left[ ({v} -  {u_{ei}}) {f} + T_{ei}\, \partial_{{v}} {f} \right],
$$
where we recall that the mixed velocities and temperatures are defined
in \eqref{defuieAdim}-\eqref{defThetaeiieAdim}. The key of the Hermite
spectral method is to construct suitable basis functions in the
velocity variable in order to cope with {the electron asymptotic limit
  $\eps \rightarrow 0$, in such a way that the limit distribution
  function is represented by only one Hermite function $\psi_0$, reducing naturally the complexity of the
  kinetic equation to the resolution of only one macroscopic equation, namely the limit model.}\\

Before performing the discretization, we recall the standard (probabilistic) Hermite polynomials $\{ J_k \}_{k \in \NN}$, which form an orthonormal basis in $L^2( \cM\, \dD v)$, where
$$
\cM(v):= {1 \over \sqrt{2 \pi}} e^{-v^2/2}\,,
$$
is the classical Maxwellian distribution function in the velocity variable. These polynomials are defined recursively as $J_0 \equiv 1$, $J_1 \equiv v$, then for any $k\geq 1$ by
$$
\sqrt{k+1}\, J_{k+1}={v}\, J_k(v) - \sqrt{k}\, J_{k-1}\,,
$$
and satisfy
$$
J_k'(v)\,=\,\sqrt{k}\, J_{k-1}(v)\,, \qquad \int_{\RR} J_k(v)\, J_l(v) \, \cM\, \dD v \,=\, \delta_{kl}\,, \quad \forall\, k,\,l \in \NN\,.
$$
For the study of the here considered Fokker-Planck
collision-operators, we have to adapt these standard Hermite
polynomials and rescale them adequately as
$$
\psi_k(v)\,:=\,{1 \over v_{th}}\, J_k\left(v\over v_{th}\right)\, \cM\left(v\over v_{th}\right)\,,
$$
where $v_{th}$ is a scaling function to be suitably defined in the sequel, such that these Hermite basis functions are well adapted for the investigation of the asymptotic limit $\eps \rightarrow 0$.\\

To summarize, we shall expand the electron distribution function $f(t,x,v)$ as follows
\be \label{ex_H}
f(t,x,v):= \sum_{k=0}^{\infty} \alpha_k(t,x)\, \psi_k(t,v)\,,
\ee
where $\{\psi_k(t,\cdot)\}_{k \in \NN}$ forms a complete, orthonormal basis of $L^2(\cM_{v_{th}}^{-1}\, \dD v)$. {The crucial feature  of these basis functions is that} the chosen weight is given by the ``limiting'' electron Maxwellian
\be
\label{M_lim}
\cM_{v_{th}(t)}(v)\,:=\,{ 1 \over \sqrt{2 \pi}\, v_{th}(t)}  \exp\left(-{v^2 \over 2 v_{th}^2(t)}\right)\,, \qquad v_{th}(t) := \sqrt{T(t)}\,,
\ee
with $T(t)$ the electron temperature given by the asymptotic limit model \eqref{def:Te}-\eqref{cstr}.\\
These Hermite basis functions are defined recursively as $\psi_0(t,v) \equiv \cM_{v_{th}(t)}(v)$,  $\psi_1(t,v) \equiv v \cM_{v_{th}(t)}/v_{th}(t)$ and for $k\geq 1$ by
\be \label{Heri}
\sqrt{k+1}\, \psi_{k+1}(t,v)\,=\,{v \over v_{th}}\, \psi_k(t,v) \,-\, \sqrt{k}\, \psi_{k-1}(t,v)\,, 
\ee
and satisfy
$$
v_{th}\, \partial_v \psi_k(t,v)\,=\,-\sqrt{k+1}\, \psi_{k+1}(t,v)\,, \qquad \int_{\RR} \psi_k(t,v)\, \psi_l(t,v) \, \cM^{-1}_{v_{th}}\, \dD v \,=\, \delta_{kl}\,, \quad \forall \,k,\,l\, \in\, \NN\,.
$$
One should be aware that different scalings of the basis functions lead to different expansion series. Based on a priori knowledge on the behaviour of our solution, the scaling is chosen in such a manner to enable the convergence towards the desired equation, in our case towards the limit model as $\eps \rightarrow 0$. { Hence it is important to underline here that for the definition of the Hermite basis functions $\{\psi_k(t,\cdot)\}_{k \in \NN}$ one should first solve the limit model \eqref{def:Te}-\eqref{cstr} in order to get the scaling factor $v_{th}(t)$.}

The coefficients $\{\alpha_k(t,x)\}_{k \in \NN}$ in \eqref{ex_H} are still to be determined, and are given in the following proposition.
\begin{prop}
Let $(f,\phi)$ be the electron distribution function resp. the potential, solution of the system \eqref{Ele_syst}, and let us consider the decomposition \eqref{ex_H} in the Hermite basis functions given in \eqref{M_lim}-\eqref{Heri}. Then the Hermite coefficients $\{\alpha_k\}_{k \in \NN}$ are solutions to the following coupled, nonlinear, infinite PDE-system
\be \label{PDE_syst}
\left\{
\begin{array}{l}
  \ds \eps\, \left(\partial_t \alpha_k\,+\,{v_{th}'\over v_{th}} \left[k \,\alpha_k \;+\, \sqrt{(k-1)k}\, \alpha_{k-2} \right]\right) \,+\, v_{th}\,\partial_x\left(\sqrt{k}\,\alpha_{k-1} +\sqrt{k+1}\, \alpha_{k+1}\right)
  \\[1.1em]
  \ds  +\,\sqrt{k} \, {E \over v_{th}}\,\alpha_{k-1} \,+\, \nu_{ee}\,  \left( k \,\alpha_k - \sqrt{k} {u \over v_{th}} \alpha_{k-1} + \left[1 - {T \over v_{th}^2} \right]\sqrt{(k-1)\,k}\, \alpha_{k-2}\right)
  \\[1.1em]
  \ds\,+\, \nu_{ei}\, \left( k \,\alpha_k - \sqrt{k} {u_{ei} \over v_{th}} \alpha_{k-1} + \left[1 - {T_{ei} \over v_{th}^2} \right]\sqrt{(k-1)\,k}\, \alpha_{k-2} \right) \;=\,0\,,
\end{array}
\right.
\ee
where we set $\alpha_{l} \equiv 0$ for $l<0$, whereas the electron
momentum $nu$ and temperature $T$ are linked with the first Hermite expansion coefficients via the formulae
\be
\label{moments}
 n\, u   \,=\, v_{th}\,\alpha_1\,, \quad T= v_{th}^2 \left[1+ \sqrt{2}\, {\alpha_2 \over \alpha_0} - \left({\alpha_1\over \alpha_0}\right)^2 \right]\,.
\ee
This system is coupled to Poisson's equation
\be \label{Poi_syst}
- \partial_{xx} \phi \,=\, n_i- \alpha_0\,, \qquad E = - \partial_x \phi\,.
\ee
\end{prop}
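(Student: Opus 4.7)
The result is essentially a projection-based identity: plug the Hermite expansion \eqref{ex_H} into \eqref{Ele_syst}, project onto $\psi_k$ using the weighted scalar product $\int \,\cdot\, \psi_k\,\cM_{v_{th}}^{-1}\dD v$, and collect. The challenge is purely algebraic, namely reducing every operation on $\psi_l$ (multiplication by $v$, $\partial_v$, $\partial_t$ through the $t$-dependent scaling, the collision operator) into a finite combination of neighboring $\psi_m$'s so that orthonormality can be exploited.

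The plan is to first record three core identities derived from the recurrence \eqref{Heri}:
\begin{equation*}
v\,\psi_l \,=\, v_{th}\bigl(\sqrt{l+1}\,\psi_{l+1} + \sqrt{l}\,\psi_{l-1}\bigr),\qquad \partial_v \psi_l \,=\, -\frac{\sqrt{l+1}}{v_{th}}\,\psi_{l+1},
\end{equation*}
together with the more delicate $v_{th}$-derivative formula
\begin{equation*}
v_{th}\,\partial_{v_{th}}\psi_l \,=\, l\,\psi_l \,+\, \sqrt{(l+1)(l+2)}\,\psi_{l+2}.
\end{equation*}
This last identity is obtained by differentiating $\psi_l(v)=J_l(v/v_{th})\,\cM_{v_{th}}(v)$ directly, using $J_l'=\sqrt{l}\,J_{l-1}$, and then rewriting $(v/v_{th})\psi_{l-1}$ and $(v/v_{th})^2\psi_l$ via two applications of the recurrence. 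I expect this to be the main obstacle: one must be careful with all the $\sqrt{l}$ factors to see the cancellations producing precisely $l\,\psi_l+\sqrt{(l+1)(l+2)}\,\psi_{l+2}$; every other contribution vanishes.

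With those identities in hand, I would treat the four terms of \eqref{Ele_syst} in turn. The time derivative yields $\partial_t\alpha_k$ plus a contribution $v_{th}'\sum_l \alpha_l\,\partial_{v_{th}}\psi_l$; projecting onto $\psi_k$ and using the $v_{th}$-derivative formula produces the bracket $(v_{th}'/v_{th})[k\alpha_k+\sqrt{(k-1)k}\,\alpha_{k-2}]$. The transport term $v\partial_x f$ gives, via the recurrence, $v_{th}\partial_x[\sqrt{k}\,\alpha_{k-1}+\sqrt{k+1}\,\alpha_{k+1}]$. The force term $-E\partial_v f$ gives $(E/v_{th})\sqrt{k}\,\alpha_{k-1}$. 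For the Fokker–Planck part, I write $\cQ_{\alpha\beta}(f)=\nu_{\alpha\beta}\partial_v[(v-u_{\alpha\beta})f+T_{\alpha\beta}\partial_v f]$ and combine the recurrence and derivative identities to obtain
\begin{equation*}
(v-u_{\alpha\beta})\psi_l+T_{\alpha\beta}\partial_v\psi_l \,=\, \Bigl(v_{th}-\tfrac{T_{\alpha\beta}}{v_{th}}\Bigr)\sqrt{l+1}\,\psi_{l+1}+v_{th}\sqrt{l}\,\psi_{l-1}-u_{\alpha\beta}\psi_l,
\end{equation*}
then applying one more $\partial_v$ produces three terms projecting onto $\psi_k$ as $-k\alpha_k+(u_{\alpha\beta}/v_{th})\sqrt{k}\,\alpha_{k-1}-(1-T_{\alpha\beta}/v_{th}^2)\sqrt{(k-1)k}\,\alpha_{k-2}$. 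Moving to the left-hand side, this is exactly the contribution for each of $\cQ_{ee}$ and $\cQ_{ei}$ displayed in \eqref{PDE_syst}. Summing the four projections yields the announced PDE system for $\{\alpha_k\}$.

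To finish, the moment identities \eqref{moments} are verified directly via the substitution $y=v/v_{th}$ and orthogonality of the $J_l$'s against $1=J_0$: one finds $\int\psi_l\,\dD v=\delta_{l0}$, $\int v\,\psi_l\,\dD v=v_{th}\delta_{l1}$ and, using the expansion of $y^2 J_l$ via two applications of the recurrence, $\int v^2\psi_l\,\dD v=v_{th}^2(\delta_{l0}+\sqrt{2}\,\delta_{l2})$. These give $n=\alpha_0$, $nu=v_{th}\alpha_1$, and $nT=\int v^2 f\,\dD v-n u^2=v_{th}^2(\alpha_0+\sqrt{2}\,\alpha_2)-v_{th}^2\alpha_1^2/\alpha_0$, which rearranges to the claimed expression for $T$. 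The Poisson equation \eqref{Poi_syst} is then immediate from $n=\alpha_0$.
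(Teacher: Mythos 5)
Your proposal is correct and follows the same route as the paper: expand $f$ in the $\psi_k$ basis, project \eqref{Ele_syst} onto $\psi_k$ in $L^2(\cM_{v_{th}}^{-1}\,\dD v)$, and use the recurrence/derivative identities for the scaled Hermite functions to identify each term of \eqref{PDE_syst}, with \eqref{moments} and \eqref{Poi_syst} following from the vanishing of the low-order moments of $\psi_l$ for $l$ large enough. The paper states this as an immediate consequence of the projection, whereas you carry out the algebra explicitly (including the key identity $v_{th}\,\partial_{v_{th}}\psi_l = l\,\psi_l + \sqrt{(l+1)(l+2)}\,\psi_{l+2}$, which checks out), so your write-up is a correct, more detailed version of the same argument.
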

\begin{proof}
Multiplying \eqref{Ele_syst} by $\alpha_l$ and taking the scalar-product in $L^2(\cM_{v_{th}}^{-1}\, dv)$, yields immediately the system \eqref{PDE_syst}. Observing then that $\langle \psi_k \rangle =0$ for all $k \neq 0$, {fact which is obtained recursively from \eqref{Heri}}, one has with \eqref{ex_H} that 
$$
n(t,x)= \int_{\RR} f(t,x,v)\, dv=\alpha_0(t,x)\,,
$$
leading to the form \eqref{Poi_syst} of Poisson's equation.
\\
Finally, the fact that $\{\psi_k(t,\cdot)\}_{k \in \NN}$ forms an orthonormal basis in $L^2(\cM_{v_{th}}^{-1}\, dv)$ means that the hierarchy \eqref{PDE_syst}-\eqref{Poi_syst} is equivalent to \eqref{Ele_syst} and admits thus a unique solution $\{ \alpha_k (t,x) \}_{k \in \NN} $.
\end{proof}
There are several advantages when using a Hermite spectral method for the discretization of the velocity variable. Firstly the functions $\{\psi_k\}_{k \in \NN}$ form a complete, orthonormal basis  of $L^2(\cM^{-1}_{v_{th}}\, \dD v)$ with respect to the Gaussian weights, such that these basis functions seem to be optimal to approach Maxwellian-like distribution functions in the velocity variable. Secondly, the lower-order terms in the expansion \eqref{ex_H} are related to the low order moments of the distribution function, meaning to the macroscopic quantities like the density, the momentum and the energy, quantities, which are usually of interest. The kinetic features of the problem are retained by considering more modes in the Hermite expansion \eqref{ex_H}.
Thus, such a Hermite spectral method permits somehow to make the link between the kinetic and the fluid descriptions, and is particularly well suited for our asymptotic study $\eps \rightarrow 0$.\\

To be more precise, one can observe that $\langle v\, \psi_k \rangle
=0$ for all $k\ge 2$ and $\langle v^2\, \psi_k \rangle =0$ for all
$k\ge 3$, {obtained again recursively from \eqref{Heri}, such that the electron
momentum $nu$, energy $w$ and temperature $T$ are linked with the
first Hermite expansion coefficients via the formulae \eqref{moments}}.
Substituting these expressions in \eqref{PDE_syst}, the first three
equations for $(\alpha_0,\alpha_1,\alpha_2)$ are given by \cite{Filbet2020,Filbet2022}.
$$
\left\{
\begin{array}{l}
\ds\eps\, \partial_t \alpha_0 \,+\, v_{th} \,\partial_x \alpha_1\,=\,0\,,
\\[1.1em]
\ds\eps \,\left(\partial_t \alpha_1 \,+\, {v_{th}' \over v_{th}}\, \alpha_1\right) \,+\,  v_{th}\,\partial_x \left(\alpha_{0} + \sqrt{2}\, \alpha_{2}\right) \,+\, {E \over v_{th}}\,\alpha_{0} \,=\, -\frac{\nu_{ei}}{2}\, \left( \alpha_1 - \eps  { u_i \over v_{th}}\, \alpha_0 \right)\,,
\\[1.1em]

\ds\eps\, \left(\partial_t \alpha_2\,+\,\sqrt{2}\, {v_{th}'\over v_{th}} \left[\alpha_{0}+\sqrt{2} \alpha_2  \right]\right) \,+\, v_{th}\,\partial_x\left(\sqrt{2}\,\alpha_{1} + \sqrt{3}\, \alpha_{3}\right) \,+\,\sqrt{2} \, {E \over v_{th}}\,\alpha_{1}\,=\, \, {\tilde{S}}_{ei}\,,
\end{array}\right.
$$
with
\begin{equation*}
{\tilde{S}}_{ei} \,=\, - \nu_{ei}\,\alpha_0\,{\sqrt{2} \over v_{th}^2}\,\left[ T_e- T_{ei} +\, u_e\,\frac{u_e- \eps\,u_i}{2}\right]\,,
\end{equation*}
which correspond exactly to the first three moment equations \eqref{fluid}. Taking into account for both species, we recover the conservations of mass,  momentum and total energy 
$$
\left\{
\begin{array}{l}
\ds\frac{\dD }{\dD t}\int_{\TT}\alpha_0\,\dD x  \,=\,0\,,
\\[1.1em]
\ds\frac{\dD}{\dD t}\int_{\TT} \left[\eps\, v_{th}\,\alpha_1 \,+\, n_i\,u_i\right] \,\dD x  \,=\,0\,,
\\[1.1em]
\ds\frac{\dD}{\dD t} \int_{\TT} \left[{v_{th}^2 \over 2} \left(\alpha_0 + \sqrt{2}\,\alpha_2\right) \,+\, \frac{1}{2}\,|\partial_x \phi|^2 \,+\, w_i\right]\,\dD x\,=\,0\,.
\end{array}\right.
$$
These constraints are automatically fulfilled for $\eps >0$, however in the limit they have to be imposed in order to get uniqueness of the limit model.
Taking formally the limit $\eps \rightarrow 0$ in \eqref{PDE_syst} we get the following Proposition.
\begin{prop}
  \label{prop:5}
In the limit $\eps \rightarrow 0$ the Hermite coefficients $\{ \alpha_k^\eps (t,x) \}_{k \in \NN} $, solutions to the system \eqref{PDE_syst}-\eqref{Poi_syst}, tend towards some coefficients $\{ \alpha_k\}_{k \in \NN}$, satisfying the following limit PDE-system 
 \be \label{ttt}
\left\{
\begin{array}{l}
  \ds  v_{th}\,\partial_x\left(\sqrt{k}\,\alpha_{k-1} \,+\,\sqrt{k+1}\,\alpha_{k+1}\right)\,+\,\sqrt{k} \, {E \over v_{th}}\,\alpha_{k-1}
  \\[1.1em]
\ds \,+\, \nu_{ee}\, \left( k \,\alpha_k\,-\, \sqrt{k}\,  {\alpha_1 \over \alpha_0}\,  \alpha_{k-1} \,-\; \left[\sqrt{2}\,{\alpha_2 \over \alpha_0} \,-\, \left({\alpha_1\over \alpha_0}\right)^2\right]\,\sqrt{(k-1)\,k}\, \alpha_{k-2} \right)
\\[1.1em]
\ds \,+\,\nu_{ei}\, \left( k \,\alpha_k \,-\, \sqrt{k}\, {\alpha_1 \over 2\, \alpha_0}\, \alpha_{k-1} \,-\, \left[\sqrt{2}\, {\alpha_2 \over \alpha_0} - {1\over 2}\, \left({\alpha_1\over \alpha_0}  \right)^2  \right]\sqrt{(k-1)\,k}\, \alpha_{k-2} \right)\,=\,0\,,
\\[1.1em]
\ds \int_{\TT}\alpha_0\,\dD x \,=\,{\mathfrak N}\,, \qquad  {1\over 2} \int_{\TT}\left[v_{th}^2\left(\alpha_0 + \sqrt{2}\, \alpha_2\right) + |\partial_x \phi|^2 + 2\,w_i\right]\,\dD x \,=\, {\mathfrak E}\,,
\end{array}
\right.
 \ee
coupled to Poisson's equation
\be
\label{ttt2}
- \partial_{xx} \phi \,=\, n_i\,-\, \alpha_0\,, \qquad E \,=\, - \partial_x \phi\,.
\ee
This system admits a unique solution $\{ \alpha_k (t,x) \}_{k \in
  \NN}$, given by $\alpha_k \equiv 0$ for all $k \neq 0$ and for the zero$^{th}$ order coefficient by the equation
$$
v_{th}\,\partial_x \alpha_{0} +\, {E \over v_{th}}\,\alpha_{0}=0\,,
$$
with $\phi$ given by Poisson's equation and $v_{th}(t)=\sqrt{T_e}(t)$ by the energy equation in \eqref{def:Te}. 
\end{prop}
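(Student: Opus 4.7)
The plan is to combine a formal pass to the limit in the hierarchy \eqref{PDE_syst} with the kinetic H-theorem already established in Theorem \ref{th:1}. First, I would send $\eps\to 0$ in \eqref{PDE_syst} mode by mode. Under uniform $\eps$-bounds on $(\alpha_k^\eps)$ and its time derivative -- bounds inherited from the mass and energy/entropy estimates of Propositions \ref{prop:2} and \ref{prop:3} rewritten via \eqref{moments} -- the prefactor $\eps$ kills both the time derivative and the $v_{th}'/v_{th}$ correction in \eqref{PDE_syst}. Evaluating in the limit the closure formulae \eqref{defuieAdim}-\eqref{defThetaeiieAdim} for $u_{ei}/v_{th}$ and $T_{ei}/v_{th}^2$, expressed through $\alpha_0,\alpha_1,\alpha_2$ via \eqref{moments}, produces precisely the collision coefficients appearing in \eqref{ttt}. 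The two scalar constraints of \eqref{ttt} then follow from integrating the $\eps$-level $k=0$ equation over $\TT$ (total electron mass) and from the total energy identity of Proposition \ref{prop:2}, both passed to the limit.

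The substance of the statement is the uniqueness claim $\alpha_k\equiv 0$ for all $k\geq 1$, and this I would obtain by reading \eqref{ttt} as the Hermite projection of the limiting kinetic equation
\[
v\,\partial_x f^0 \,-\, E^0\,\partial_v f^0 \,=\, \cQ_{ee}(f^0,f^0) \,+\, \cQ_{ei}(f^0,f_i^0)
\]
for $f^0:=\sum_{k\in\NN}\alpha_k\psi_k$, i.e.\ the second line of \eqref{tmp2} in the proof of Theorem \ref{th:1}. This equivalence is derived by multiplying \eqref{ttt} by $\psi_l\,\cM_{v_{th}}^{-1}$, summing over $l$, and recognising the three-term recurrence \eqref{Heri} inside the transport and collision terms. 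Once the equivalence is in place, the H-theorem argument of Theorem \ref{th:1} -- combining the inter-species dissipation of Proposition \ref{prop:1} with the intra-species one -- forces $f^0=\cM_{n_e^0,0,T_e^0}$ with $u_e^0=0$ and a spatially constant temperature $T_e^0(t)$. Since the Hermite basis has been tuned so that $\psi_0=\cM_{v_{th}(t)}$ with $v_{th}^2(t)=T_e^0(t)$, the orthogonality of $\{\psi_k\}_{k\in\NN}$ in $L^2(\cM_{v_{th}}^{-1}\,\dD v)$ immediately identifies $\alpha_0=n_e^0$ and $\alpha_k\equiv 0$ for $k\geq 1$.

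It then remains to collapse \eqref{ttt} onto $\alpha_0$. Plugging $\alpha_k\equiv 0$ for $k\geq 1$ into the $k=1$ equation makes every collision contribution vanish (for $k=1$ the prefactor $\sqrt{(k-1)k}$ is already zero, while the $\sqrt{k}\,(\alpha_1/\alpha_0)\alpha_{k-1}$ piece cancels $k\alpha_k$), and the transport terms reduce to $v_{th}\,\partial_x\alpha_0+(E/v_{th})\alpha_0=0$, which together with \eqref{ttt2} and the prescribed total mass gives the Maxwell-Boltzmann form \eqref{def:MB}. The main obstacle I expect is the equivalence invoked in the second paragraph between the infinite Hermite hierarchy and the kinetic equation: this demands summability of $\sum_k\alpha_k\psi_k$ and legitimacy of the exchange between the formal $\eps\to 0$ limit and the infinite sum. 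I would bypass this either by truncating at a finite number of Hermite modes and sending the truncation to infinity using uniform polynomial moment bounds on $f^\eps$, or, more economically, by exploiting the equivalence between \eqref{PDE_syst}-\eqref{Poi_syst} and \eqref{Ele_syst} at the $\eps>0$ level and performing the limit once and for all on $f^\eps$ before projecting back onto the Hermite basis.
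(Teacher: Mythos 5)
Your proposal is correct and follows essentially the same route as the paper: the paper's proof is a one-line observation that the limit hierarchy \eqref{ttt}--\eqref{ttt2} is exactly the Hermite-basis rewriting of the well-posed limit system \eqref{def:MB}--\eqref{eq:lim2} (equivalently, the form $(L)'$ of Remark \ref{remark_L}), whose unique solution is the Maxwell--Boltzmann distribution represented by $\psi_0$ alone. You simply spell out the details that the paper elides — the mode-by-mode passage to the limit, the identification of the hierarchy with the kinetic equation via the recurrence \eqref{Heri}, and the orthogonality argument giving $\alpha_k\equiv 0$ for $k\geq 1$ — together with an honest flag of the summability issue, which the paper does not address either.
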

\begin{proof}
 {We simply observe that \eqref{ttt}-\eqref{ttt2}  is nothing else than
 the electron Maxwell-Boltzmann relation given by the well-posed limit system \eqref{def:MB}-\eqref{eq:lim2}, equivalently rewritten in Remark \ref{remark_L} under the form \eqref{remark_L}.}
  \end{proof}
\subsection{Space/time discretization}
\label{sec:4.2}
\newcommand{\xR}{{x_{i+\frac{1}{2}}}}
\newcommand{\xL}{{x_{i-\frac{1}{2}}}}
\newcommand{\iL}{{i-\frac{1}{2}}}
\newcommand{\iR}{{i+\frac{1}{2}}}
\newcommand{\f}{\frac}
\newcommand{\testR}{{\varphi}}   
\newcommand{\testP}{{\eta}}   
\newcommand{\testE}{{\zeta}}
\newcommand{\px}{\partial x}
In the spirit of \cite{Ayuso2012,Filbet2020,Filbet2022}, we consider now a discontinuous Galerkin approximation for the space discretization of  the Vlasov-Fokker-Planck equation, written via the Hermite basis functions under the form \eqref{PDE_syst}.

We first introduce some notation and start with
$\{\xR\}_{i=0}^{i=N_x}$, a partition of ${\mathbb T}=(0,L)$, with $x_{\frac12}=0$,
$x_{N_x+\frac12}=L$. Each element is denoted by $I_i=[\xL,
  \xR]$ with length $h_i$ and
$$
h\,=\,\max_i h_i\,.
$$
For any $l\in\mathbb{N}$ we introduce the finite dimensional discrete, piecewise polynomial space 
\begin{equation}
  \label{defVhk}
V_h^l=\left\{u\in L^2(0,L),\quad u|_{I_i}\in P_l(I_i), \quad\forall i =0, \ldots, N_x\right\}\,,
\end{equation}
where the local space $P_l(I)$ denotes the set of polynomials of degree at
most $l$ on the interval $I$. In the here presented simulations we used second order polynomials, {\it i.e.} $l=2$.
We further define for any $i\in\{0,\ldots,N_x\}$,  the jump $[u]_\iR$
and the average $\{u\}_\iR$ of $u$ at $x_\iR$  as
$$
[u]_\iR\,:=\,{u(x_\iR^+)\,-\,u(x_\iR^-)} \quad{\rm and}\quad
\{u\}_\iR\,:=\,\frac12\,\left(u(x_\iR^+)\,+\,u(x_\iR^-)\right)\,, 
$$
where $u(x^\pm):=\lim_{\Delta x\rightarrow 0^\pm} u(x+\Delta x)$.  We
also set

\[  u_\iR=u(x_\iR)\,, \qquad u^\pm_\iR=u(x^\pm_\iR)\,. \]

The approximate solution of \eqref{Ele_syst}, obtained using Hermite polynomials in the velocity variable and a discontinuous Galerkin discretization in the space variable, is reconstructed as
\be
\label{dgfseries}
f_{h}(t,x,v)=\sum_{k=0}^{N_H-1}\alpha_{k,h}(t,x)\psi_k(t,v)\,,
\ee
where  $\{\psi_k\}_k$ are the basis functions defined by \eqref{Heri} and the set $\{\alpha_{k,h}\}_k$ is determined by the discontinuous Galerkin method, employed for solving  \eqref{PDE_syst} and presented in the following. The truncation index $N_H \in \NN$ \cla{ will be adapted, considering the vicinity to the limit model, as explained in Section \ref{sec:4.3}. This reduces the computational costs.}\\

On one hand, we look for an approximation $\alpha_{k,h}(t,\cdot) \in V_h^l$, such that for any $\varphi_k \in V_h^l$, we have
\be
  \eps\,\frac{\dD}{\dD t}\int_{I_j}\alpha_{k,h}\,\varphi_k\,\dD x \,=\, b_k^j(E_h,\alpha_h,\varphi_k) \,+\, a_k^j(g_k,\varphi_k),\quad 0\leq k \leq N_H-1,
  \label{dgcn}
\ee
where $b_k^j$ is an approximation of the source terms of \eqref{PDE_syst} 
\be
\label{bnh}
\left\{
  \begin{array}{ll}
    \ds b^j_{k}(E_h,\alpha_h,\varphi_k) &\ds =\, -\int_{I_j}\left[\eps\,\cI_k k(\alpha_h)  + \frac{\sqrt{k}\,E_{h}}{v_{th}}\,\alpha_{k-1,h}\,+\, \cQ_k k(\alpha_h) \right]\varphi_k\,\dD x\,,
    \\[1.1em]
    \ds \cI_k k(\alpha_h)  &\ds =\, \frac{v_{th}'}{v_{th}}\left(k\,\alpha_{k,h}+\sqrt{(k-1)k}\,\alpha_{k-2,h}\right)\,,
    \\[1.1em]
    \ds\cQ_k k(\alpha_h)  &\ds =\, (\nu_{ee}+\nu_{ei})\,k \,\alpha_k^\eps \,-\, \sqrt{k}\, {\nu_{ee}u^\eps+\nu_{ei}u_{ei}^\eps \over v_{th}} \,\alpha_{k-1}^\eps
    \\[1.1em]
 \,&\ds\ + \,\left(\nu_{ee}\left[1 - {T^\eps \over v_{th}^2}\right]+\nu_{ei}\left[1 - {T_{ei}^\eps \over v_{th}^2} \right]\right)\,\sqrt{(k-1)\,k}\, \alpha_{k-2}^\eps\,,
\end{array}\right.
\ee
whereas $a^j_k$ represents the space derivative approximation,  defined by 
\be
\label{anh}
\left\{
  \begin{array}{l}
\ds a^j_{k}(g_{k},\varphi_k) \,=\, -\int_{I_j}g_{k}\,\testR'_k \,
    \dD
    x\,+\,\hat{g}_{k,j+\f12}\,\testR^-_{k,j+\f12}-\hat{g}_{k,j-\f12}\,\testR^+_{k,j-\f12}\,,
    \\[1.1em]
\ds g_{k} \,=\,v_{th}\,\left(\sqrt{k+1}\,\alpha_{k+1,h}\,+\,\sqrt{k}\,\alpha_{k-1,h}\right)\,.
\end{array}\right.
\ee

The numerical flux $\hat{g}_{k}$ in \eqref{anh} is given by
\be
\label{lfflx}
\hat{g}_{k}\,=\,\f12\left[g^-_{k}+g^+_{k}-\delta_k\,\left(\alpha^+_{k,h}\,-\,\alpha^-_{k,h}\right)\right]\,,
\ee
with the numerical viscosity coefficient defined for $k=0$ as $\delta_0=0$, corresponding to a centered flux, and for $1\leq k\leq N_H-1$ we consider the global Lax-Friedrichs flux with $\delta_k=\delta=\sqrt{N_H}/\alpha_{N_H,h}$. The choice of the centered flux in the case $k=0$ is made to recover the conservation of the semi-discrete total energy.\\

On the other hand, we  search for an approximation of the electric field $E_{h}$.  To this end, we need to consider the potential function $\phi_{h}(t,x)$, such that
\begin{equation}\label{approxPoisson}
\left\{
\begin{array}{l}
\ds E_{h}\,=\,-\f{\partial \phi_{h}}{\px}\,, \\[0.9em]
\ds\f{\partial E_{h}}{\px} \,=\, n_i - \alpha_{0,h}\,,
\end{array}\right.
\end{equation}
which is equivalent to the one dimensional Poisson equation 
$$
-\f{\partial^2 \phi_{h}}{\px^2} \,=\,n_{i,h} \,-\, \alpha_{0,h}\,.
$$
Let us discretize \eqref{approxPoisson} via a discontinuous Galerkin approximation. For this, we
look for a couple $(\phi_{h}(t,\cdot), E_{h}(t,\cdot)) \in V_h^l \times V_h^l $, such
that for any $\testP$ and $\testE$ belonging to  $ V_h^k$, we have 
\be
\label{dgP}
\left\{
\begin{array}{l}
\ds+\int_{I_j}\phi_{h}\,\testP'\,\dD x \, -\,
  \hat{\phi}_{h,j+\f12}\,\testP^-_{j+\f12} \,+\,\hat{\phi}_{h,j-\f12}\,\testP^+_{j-\f12}\,=\,\int_{I_j}
  E_{h}\,\testP\,\dD x\,,
  \\[1.1em]
\ds -\int_{I_j}E_{h}\,\testE'\,\dD x \,+\,
  \hat{E}_{h,j+\f12}\,\testE^-_{j+\f12}
  \,-\,\hat{E}_{h,j-\f12}\,\testE^+_{j-\f12}\,=\,\int_{I_j} \left(
  n_{i,h}  - \alpha_{0,h}\right)\,\testE\,\dD x\,,
\end{array}\right.
\ee
where the numerical fluxes $\hat{\phi}_{h}$ and $\hat{E}_{h}$ in
\eqref{dgP}  are taken as 
\be
\label{fluxps}
\left\{
\begin{array}{l}
  \hat{\phi}_{h} \,=\, \{\Phi_{h}\}\,, \\[0.9em]
  \hat{E}_{h} \,=\, \{E_{h}\}\,-\,\beta\,[\Phi_{h}]\,,
\end{array}\right.
  \ee
with $\beta$ being a positive constant, possibly proportional to $1/h$ (see \cite{cockburn} for more details).\\

Finally, the last free parameter is $v_{th}(t)$. It is chosen such
that our numerical discretization captures well the limit
$\eps\rightarrow 0$. Therefore, following Proposition \ref{prop:5}, we
choose $v_{th}(t)$ such that the energy conservation is satisfied in
the limit $\eps\rightarrow 0$, namely via the equation for $(\overline{\alpha}_0,\overline{\phi})$,
\be
\label{def:vth}
{1\over 2} \int_{\TT}\left[v_{th}^2\,\overline{\alpha}_0 \,+\, |\partial_x \overline{\phi}|^2 \,+\, 2\,w_i\right]\,\dD x \,=\, {\mathfrak E},
\ee
where $\overline{\phi}$ solves
$$
\left\{
\begin{array}{l}
\ds\overline{\alpha}_0\,=\,\frac{{\mathfrak N}}{\int_{\TT}e^{\overline{\phi}/v_{th}}\dD x}\, e^{\overline{\phi}/v_{th}}\,,
 \\[1.2em]
 \ds- \partial_{xx} \overline{\phi} = n_i- \overline{\alpha}_0.
 \end{array}\right.
$$
This choice will guarantee that in the limit $\eps \rightarrow 0$ the coefficients $\{\alpha_{k,h}^\eps\}_k$ will be consistant with the
Maxwell-Boltzmann distribution \eqref{def:MB}.\\

Concerning the time-discretization, we apply a second-order Crank-Nicolson scheme to the just introduced discontinuous Galerkin method. We denote by $\mathbf \alpha=(\alpha_0,\ldots,\alpha_{N_H-1})$ the solution to \eqref{dgcn}--\eqref{lfflx} and by $(\cdot,\cdot)$ the standard $L^2$ inner-product on the space interval $(0,L)$, namely
\[(\alpha_n,\varphi):=\int_0^L\alpha_n\,\varphi\,\dD x\,,\]
and let $\Delta t>0$ be the time step.

Furthermore let  $\mathbf{\alpha}_h^m=(\alpha_0^m,\ldots,\alpha_{N_H-1}^m)$ be the approximation of the solution $\mathbf{\alpha}$ at time $t^m=m\,\Delta t$ for $m\geq 0$,  and let us denote, for an arbitrary variable $\xi$
$$
\xi^{m+1/2} := \f12\,\left(\xi^m+\xi^{m+1}\right)\,.
$$

Assuming known $\alpha_h^{m}$, we compute now $\alpha_k^{m+1}$ for $k=0,\ldots,N_H-1$ via
\be
\label{c-n}
\f{ (\alpha_k^{m+1}-\alpha_k^{m},\varphi_k )}{\Delta t} +a_k(g_k^{m+1/2},\varphi_k) +
b_k(v_{th}^{m+1/2},\alpha^{m+1/2}_{h},E^{m+1/2},\varphi_k) = 0\,, \quad \forall \varphi_k \in V_h^l\,,
\ee
and solve the DG approximation of the Poisson equation \eqref{dgP}--\eqref{fluxps} to obtain $E^{m+1}$.

\cla{It is worth to emphasize that this time discretization is not
necessarily uniformly stable with respect to $\eps$, $h$ and
  $N_H$.  For an explicit scheme the CFL condition for the system
  \eqref{dgcn}--\eqref{fluxps} would be
  $$
\Delta t = \mathcal{O}\left(\frac{\eps\,h}{\sqrt{N_H}}\right),
$$
where $N_H$ denotes the number of Hermite coefficients taken into account.
Here an iterative solver is applied to solve this nonlinear and
ill-conditionned system. In practice, to
ensure the convergence of the iterative method, the time-step
  $\Delta t$ is still dependent on $\eps$ and $(h,\,N_H)$. An AP-scheme is further needed to cope with this
  problem and this requires a further study \cite{CF_bis}. However the
  Crank-Nicolson scheme  is well adapted to our approach where the preservation of energy plays a key role.}

\subsection{Adaptive algorithm for Hermite coefficients}
\label{sec:4.3}
\cla{In this subsection, we provide a simple and efficient way to reduce
the computational complexity. Indeed, due to the choice of the scaling function  $v_{th}$, we expect
that when the solution of the kinetic equation approaches a
hydrodynamical regime,  the higher order Hermite coefficients will
rapidly converge to zero
and can then be neglected. Therefore we propose a simple adaptive
algorithm at each time iteration to remove small coefficients (see Algorithm
\ref{alg:1}). Briefly speaking, a Hermite coefficient can be neglected
when this coefficient and its neighbours  are small. Otherwise, it
should be considered. This adaptive
procedure allows to remove or  to add
Hermite coefficients dynamically.  As a consequence, the
computational complexity will be considerably reduced in the asymptotic regime $\eps\rightarrow 0$
even when the initial datum is not well prepared. Indeed, the initial
time layer will be described correctly using a large number of
Hermite coefficients, while when the solution approaches the equilibrium
fewer and fewer coefficients will be used.

\begin{algorithm}
\For{$k= 3,\ldots$}{
  \eIf{$\|\alpha_k^m\|_{\infty}$, \,  $\|\alpha_{k+1}^m\|_{\infty}$, \,
    $\|\alpha_{k-1}^m\|_{\infty} \leq 10^{-6}$ }{ set $I_k^m = 0$
    \Comment{Contribution of $\alpha_k^m$ will be neglected\,\,} }{ set $I_k^m = 1$
    \Comment{Contribution of $\alpha_k^m$ will be considered} }
}
Solve the system \eqref{dgcn}--\eqref{lfflx} only for
$(\alpha_k^{m+1})_{k}$ such that $I_k^m=1$, whereas we set $\alpha_k^{m+1}=0$ for $I_k^m=0$.
\\
\, \quad 
\\
\caption{Adaptive algorithm for the computation of the Hermite coefficients at time step $t^{m+1}$, supposing everything known at time step $t^{m}$.}\label{alg:1}
\end{algorithm}
}

\section{Numerical simulations}
\label{sec:5}
\setcounter{equation}{0}
\setcounter{figure}{0}
\setcounter{table}{0}

{In this section we shall present numerical simulations based
on the scheme proposed above  to  investigate
in more details the adiabatic regime when $\eps \ll 1$ for the Vlasov-Poisson Fokker-Planck
system \eqref{Ele_syst}. Our aim  is to focus on  weakly collisional plasmas, where collective
effects, due to the transport part, dominate collisional effects. In
this situation, we  illustrate  what happens in this $\eps \rightarrow 0$ adiabatic asymptotic, and in particular we are interested in investigating what is the advantage of using a Hermite
spectral-method in the velocity variable, for physically relevant mass ratio $\eps^2={m_e}/{m_i}$.}

\subsection{One species case}
\label{sec:5.1}
{In this first example, we examine only the electron evolution, the ions
being considered as forming a sort of fixed background, interacting
with the electrons only via the electric field.  Since in this case,
the limit $\eps\rightarrow 0$ corresponds to the asymptotic limit
$t\rightarrow \infty$, we fix $\eps$ to one and investigate the long
time behavior of the following equation  
$$
\left\{
 \begin{array}{l}
  \displaystyle \partial_t f \,+\, v\, \partial_x f \,-\, E\, \partial_v f =  \nu_{ee}\, \partial_v\left[ (v- u)\, f+ T\, \partial_v f \right] \,,
   \\
   [1.1em]
 \ds - \partial_{xx} \phi = n_i - n, \qquad E = - \partial_x \phi\,,
    \end{array}
  \right.
$$
with $\nu_{ee}=0.01$, which corresponds to a weakly collisional
plasma.  Also the background ion density is considered as time-independent and given by
$$
n_i(x)  \,=\; 1+\kappa\,\cos(k\,x)\,, \qquad \forall x \in (0,L)\,,
$$
with  $k=2\pi/L$, $L=12$ and $\kappa=0.1$, whereas the electron
initial distribution 
function $f_0$ is given by
\be
\label{2stream}
f_0(x,v)\,=\,\frac{1}{6\sqrt{2\pi}}\,  (1+5\,v^2) \,
\exp\left(-\frac{|v-u_0(x)|^2}{2}\right)\,, \qquad \forall (x,v) \in (0,L) \times \RR\,,
\ee
with
$u_0(x) = 0.5 \,\sin(k\,x)$. Let us emphasize that the distribution
function $f$ is initially far from the thermal equilibrium since it
corresponds to two streams in the velocity variable with a nonzero
mean velocity $u_0$. Moreover, since we do not consider in this test case collisions between electrons and
ions, this equation conserves mass,
momentum and total energy, hence the assumption that
$$
\int_0^L \int_\RR v f_0^\eps(x,v) \,\dD v \,\dD x = 0\,,
$$
is mandatory to get the convergence of the distribution function, when $t\rightarrow
\infty$,  towards a stationary state given by the Maxwell-Boltzmann
distribution \eqref{def:MB}. The  scaling parameter necessary for the definition of the Hermite basis functions is chosen as
$v_{th}=\sqrt{\overline{T}}$, where  the temperature $\overline{T}$
and the potential $\overline{\phi}$ correspond to the stationary
solution of the limit model
\be
\label{def:Tbar}
\left\{
\begin{array}{l}
\ds\frac{{\mathfrak N}}{2}\, \overline{T} \,+\,\frac{1}{2}\,\int_{\TT}
|\partial_x \overline{\phi}(x)|^2 \,\dD x\, =\, {\mathfrak E}\,,
\\
\ds- \partial_{xx} \overline{\phi} \,+\, c\,\exp\left( \frac{\overline{\phi}}{\overline{T}}\right)\,=\, n_i\,,
  \end{array}\right.
\ee
with $c$ uniquely determined by the conservation of the particle number. This
latter system has to be solved initially.
\\
We performed several numerical simulations using  the discontinuous
Galerkin/Hermite method  and refining  the mesh and
the time step $\Delta t$ as in \cite{Filbet2020, Filbet2022}, but for
the sake of clarity we only report  numerical simulations with
$N_x\times N_H=32\times 64$ and  $\Delta t =1/500$ for which the
numerical results are similar with those obtained with refined
meshes. Since initially the solution is far from equilibrium and
collective effects dominate, the adiabatic asymptotics is not valid in a
transient regime, hence a large number of modes $N_H$ is needed to
describe kinetic effects.
\\
On one hand, we show  on Figure \ref{fig:11} $(a)$ the time
evolution of the deviations with respect to the initial condition of
the discrete mass, momentum and total energy and  observe
that  the errors on these quantities are of order $5\,\times 10^{-8}$  (our space/time discretization does not ensure
exact conservations) which is acceptable. We also present in Figure
\ref{fig:11} $(b)$ the convergence in time of the potential $\Phi$ towards its
equilibrium $\overline{\Phi}$. The amplitude of the potential $\Phi$ first
 oscillates strongly, and then, for times $t\geq 20$, it is damped and
converges to the stationary state $\overline{\Phi}$ given by \eqref{def:Tbar}. 
\\
On the other hand, we  present in Figure \ref{fig:12} $(a)$ the time
evolution of the $L^2$-norm of the mean electron velocity as well as
of the temperature-deviation with respect to the temperature equilibrium
$v_{th}^2$ in log scale. As for the potential, we observe a transient regime, where
collective effects dominate, then both quantities converge to zero
exponentially fast.  In Figure
\ref{fig:11}(b) we  present then the time evolution of the  $L^2$-norm of the Hermite
coefficients $(\alpha_k)_{1\leq k \leq 6}$.  All these coefficients decrease almost exponentially
fast to zero, saturating then around $10^{-12}$. This illustrates
the convergence of our electron distribution function (when
$t\rightarrow \infty$) towards the Maxwell-Boltzmann distribution for which all Hermite
coefficients are zero, except the main one $\alpha_0$. This is made possible
by the appropriate choice of the scaling parameter $v_{th} = \sqrt{\overline{T}}$ according
to \eqref{def:Tbar}. These results show that in practice, higher-order
Hermite coefficients can be neglected  when
$t$ becomes large  and  thus the truncated Hermite hierarchy reduces to a consistant
approximation of the limit system \eqref{def:MB}-\eqref{cstr}. It
illustrates the efficiency of our algorithm passing automatically from the numerical
resolution (and complexity) of the kinetic equation when the solution
is far from a Maxwell-Boltzmann distribution to the adiabatic limt,
where only one mode is used for the density.}

\begin{figure}
  \centering
  \begin{tabular}{cc}
	\includegraphics[width=3.2in,clip]{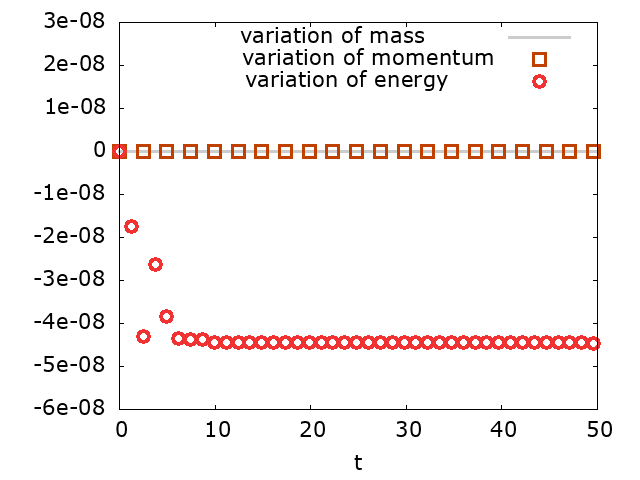}&
	\includegraphics[width=3.2in,clip]{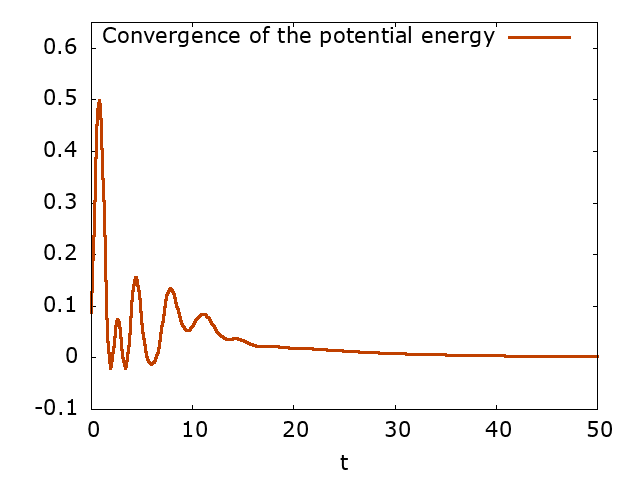}
        \\
        (a) & (b)
        \end{tabular}
	\caption{{\bf One species case :} $(a)$ deviation of mass,
          momentum and energy as compared to the initial condition; $(b)$  time
          evolution of the potential  deviations from the asymptotic
          value $\overline{\Phi}$
          for  $N_x\times N_H = 32 \times 64$.}
	\label{fig:11}
\end{figure}

\begin{figure}
  \centering
  \begin{tabular}{cc}
	\includegraphics[width=3.2in,clip]{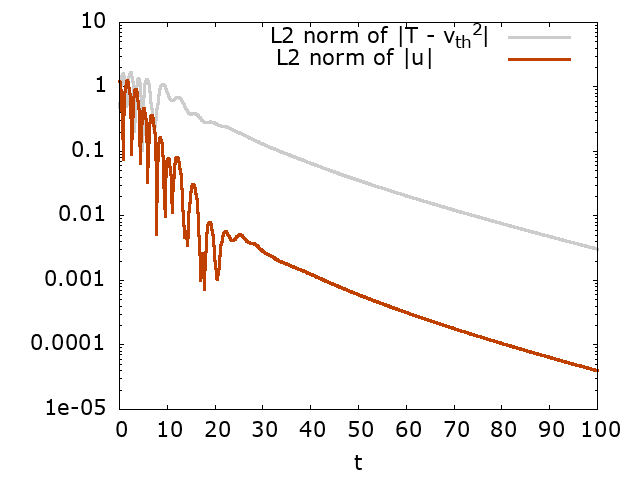} &
	\includegraphics[width=3.2in,clip]{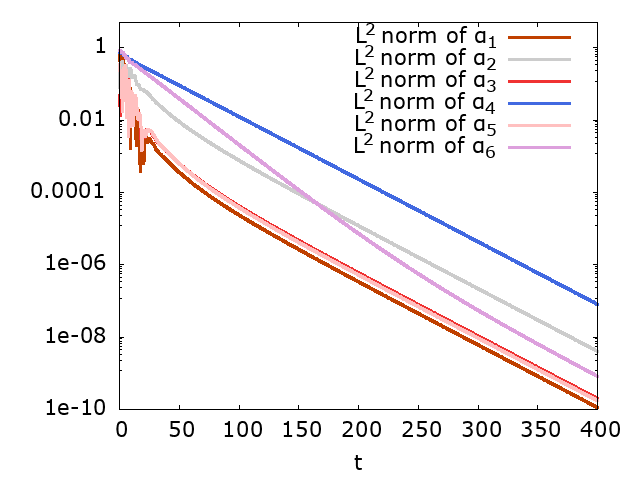}\\
        (a)&(b)
        \end{tabular}
	\caption{{\bf One species case:} $(a)$ time evolution (in log-scale)  of $\|u\|_{L^2}$ and
          $\|T-v_{th}^2\|_{L^2}$; $(b)$ time evolution of the Hermite-coefficients $(\alpha_k)_{1\leq k \leq 6}$ in log-scale, for $N_x\times N_H = 32 \times 64$.}
	\label{fig:12}
\end{figure}

Finally we show on Figure \ref{fig:14} some snapshots of the electron distribution function corresponding to  the transient regime.  Indeed,
when $t\leq 12.5$, collective effects dominate and the distribution
function starts to develop some filaments in phase space then for larger times the electric field is
damped (due to collisional effects) and $f$ converges to the space non homogeneous
Maxwell-Boltzmann equilibrium when $t\rightarrow \infty$.  \\

\begin{figure}
  \centering
  \begin{tabular}{cc}
	\includegraphics[width=3.2in,clip]{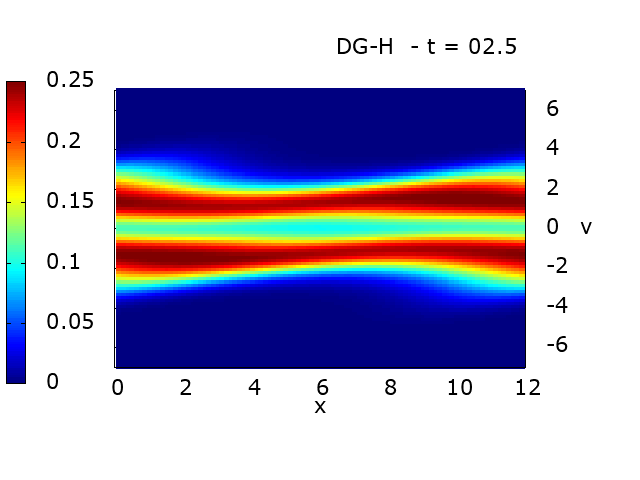} &
	\includegraphics[width=3.2in,clip]{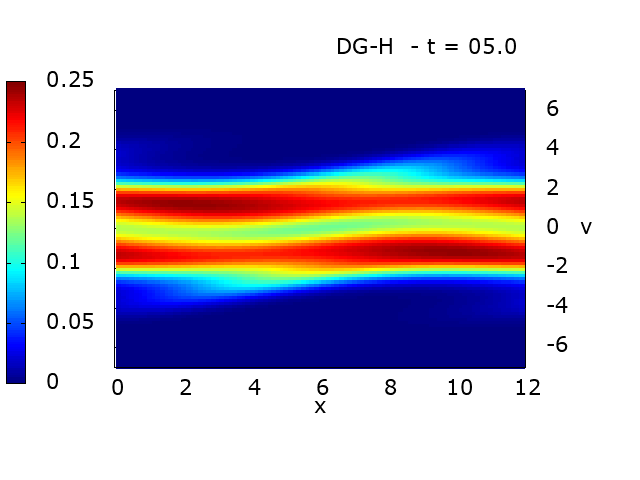}
        \\
        \includegraphics[width=3.2in,clip]{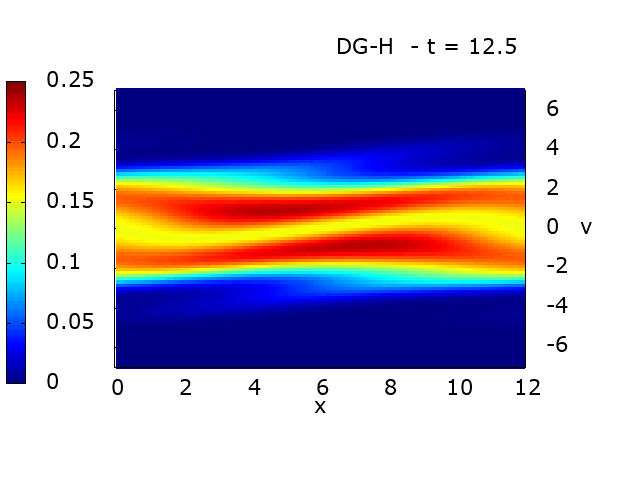} &
	\includegraphics[width=3.2in,clip]{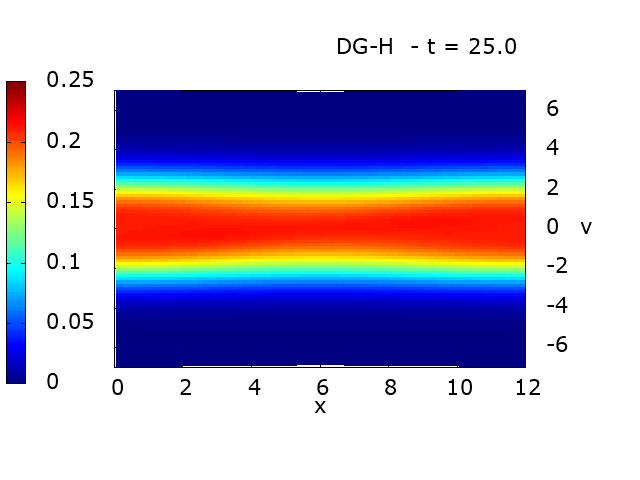}
\end{tabular}
	\caption{{\bf One species case:}  Surface plot of
          the distribution function $f$ at several times $t=2.5$, $5$, $12.5$ and $25$, with  $N_x\times N_H = 32 \times 64$.}
	\label{fig:14}
      \end{figure}
      
%

\subsection{Two species case}
\label{sec:5:2}
In this second test case we consider the multi-species framework, which is more relevant in
plasma physics but to reduce the computational
effort we design a simplified version of  the two species case. 
{Indeed, due to the fact that the present scheme does not cope with
  the time-stiffness, we
  have to adapt the time-step $\Delta t$ with the electron dynamics,
  thus choosing $\Delta t \sim \eps$. This leads (in general) to
  rather huge computational costs for the resolution of the kinetic
  ion dynamics for small $\eps$-values, the electron dynamics being
  not so cumbersome, due to the well-designed Hermite approach. Hence,
  to be able to perform some simulations in reasonable times, we shall
  suppose here the ions well-defined by macroscopic quantities, and
  uniquely the electrons follow a kinetic model. This shall  greatly
  accelerate the computations, and shall permit to focus on the
  adiabatic electron asymptotics, and the designed Hermite spectral
  approach.}

Let us start by assuming the electron distribution function $f_e$
being solution to the Vlasov-Poisson-Fokker-Planck equation \eqref{Ele_syst}
with $(\nu_{ee},\nu_{ei})=(0.5,\,0.1)$  and an initial distribution given by 
\begin{align}
\label{bot}
f_e(0,x,v)\,=\,\frac{1}{\sqrt{2\,\pi}} \,\exp\left(-\frac{v^2}{2}\right)\,(1+\kappa\cos(k\,x))\,, \qquad \forall (x,v) \in {\mathbb T} \times \RR\,,
\end{align}
with $\kappa=0.01$ and $k=2\pi/L$ with $L=12$.

{The ion distribution function is supposed Maxwellian
  $$
  f_i(t,x,v):={n_i(x) \over \sqrt{2\, \pi\, T_i(t)}} \, e^{- {v^2
      \over 2\, T_i(t)}},
  $$
with a density depending only on the space variable and  given by}
$$
n_i(x) \,=\, 1\,+\, 0.2\,\cos(k\,x)\,, \qquad \forall x \in {\mathbb T}\,,
$$
whereas the mean velocity $u_i$ is set to zero and  the ion temperature $T_i$ is supposed to depend  only on the  time variable and to satisfy the following equation
$$
-\eps\, \mathfrak{n}_e\, \frac{\dD T_i}{\dD t} \,=\, \iint_{\TT\times\RR} \cQ_{ei}(f_e)\,|v|^2\dD v\dD x\,.
$$
The choice of the right hand side in the latter equation is motivated
by the requirement of an exact  conservation of the total energy. Indeed, multiplying
\eqref{Ele_syst} by $v^2/2$ and integrating with respect to
$(x,v)\in\TT\times\RR$, yields
\begin{eqnarray*}
\frac{\eps}{2}\,\frac{\dD }{\dD t} \iint_{\TT\times\RR} f_e \,|v|^2\,
\dD v\dD x &=& -\int_{\TT} E\,n_e\,u_e \dD x  \,+\,
               \frac{1}{2}\iint_{\TT\times\RR} \cQ_{ei}(f_e) \,|v|^2\,\dD
               v\dD x \\
   &=& -\int_{\TT} \phi\, \partial_x (n_e\,u_e) \dD x  \,+\,
               \int_{\TT}  \nu_{ei}\, n_e \, S_{ei}\,\dD x\,,
\end{eqnarray*}
where $S_{ei}$ is given by \eqref{def:Sei}.  Furthermore using the
continuity equation on $n_e$ \eqref{fluid} and the fact that $n_i$ does not depend on time,
we have
$$
\eps\,\partial_t(n_e - n_i) \,+\, \partial_{x} (n_e \,u_e) \,=0\,,
$$
such that using Poisson's equation and the equation for  $T_i$, 
permits indeed to obtain the total energy conservation
$$
\frac{\eps}{2}\,\frac{\dD }{\dD t} \left[\iint_{\TT\times\RR} f_e \,|v|^2\,
\dD v\dD x  \,+\, \int_{\TT} |E|^2 \dD x \,+\, \mathfrak{n}_e\,   T_i \right] \,=\, 0\,.
$$

\begin{remark}
Observe that similarly to $T_i$, we could also impose an equation
on the mean velocity $u_i$ to conserve the global momentum, but
it is not necessary for our purpose here. 
\end{remark}

Now let us verify that for this simplified model,  the distribution
function $f_e$ tends to the Maxwell-Boltzmann distribution \eqref{def:MB} when
$\eps\rightarrow 0$. The point is that the simplified model does not satisfy the
H-theorem for any $\eps>0$, but we will show that it is verified
when $\eps$ is sufficiently small provided that the macroscopic
quantities are bounded. Indeed, computing the time derivative of the
entropy gives
{\begin{eqnarray*}
\eps\,\frac{\dD }{\dD t} \iint_{\TT\times\RR} f_e \,\log(f_e)\,
\dD v\dD x &+& \iint_{\TT\times\mathbb R} \left[ \nu_{ee}\,
               T_{ee}\, {\cM_{e} \over h_e}
               \left|\partial_vh_e\right|^2
               \,+\,\nu_{ei}\,T_{ei}\frac{\cM_{ei}}{h_{ei}}
               \left|\partial_vh_{ei}\right|^2 \right]\dD v\,\dD
               x
  \\[1.1em]
  &=&  \iint_{\TT\times\mathbb R} \frac{\nu_{ei}}{T_{ei}}
      \left( f_e \,|v-u_{ei}|^2 - T_{ei}
      f_e\right)\dD v\dD x\,,
\end{eqnarray*}}
where $h_e$ and $h_{ei}$ are given in \eqref{def:h}. Then, computing the 
term on the right hand side yields
{\begin{eqnarray}
   \nonumber
\eps\, \frac{\dD\cH_e}{\dD t} &+& \iint_{\TT\times\mathbb R} \left[ \nu_{ee}\,
               T_{ee}\, {\cM_{e} \over h_e}
               \left|\partial_vh_e\right|^2
               \,+\,\nu_{ei}\,T_{ei}\frac{\cM_{ei}}{h_{ei}}
               \left|\partial_vh_{ei}\right|^2 \right]\dD v\,\dD
                                       x \\
   \label{eq:123}
    &=& -\frac{1-\eps^2}{1+\eps^2}\,\int_{\TT} \frac{\nu_{ei}\,n_e |u_e|^2}{4
               \,T_{ei}} \dD x \,+\, \frac{\eps^2}{1+\eps^2} \int_{\TT} \frac{\nu_{ei}\,n_e}{T_{ei}} \, \left(T_e^{\eps}-T_i\right)\, \dD x.  
\end{eqnarray}}
Therefore, in the limit $\eps\rightarrow 0$, we can proceed as in the
proof of Theorem \ref{th:1} and get, from the entropy dissipation,  that  in the limit $f_e$ tends towards an equilibrium of the form $\cM_{n_e,0,T_e}$ where $n_e$ is given by \eqref{def:MB}.
Furthermore, using the definition of $S_{ei}$ in \eqref{def:Sei},
the equation on $T_i$ can be written as
$$
\mathfrak{n}_e\, \frac{\dD T_i}{\dD t} \,=\, \frac{2\,\eps}{1+\eps^2}\,\int_{\TT}
\nu_{ei}\,n_e\left[ T_e- T_i + \frac{|u_e|^2}{2}\right]\dD x\,,
$$
which means that $T_i$ converges to a constant temperature as
$\eps\rightarrow 0$. {Let us mention that a similar approach has been
used in \cite{Buet}  in a slightly different context.} \\

After this short presentation of our simplified model, let us present our numerical results. We take $N_x\times N_H=32\times 32$ and compare the obtained
solutions  with a reference solution computed on a refined grid of $N_x\times
N_H=128\times 128$ for several values of $\eps\in\{
10^{-3},\,10^{-2},\,10^{-1},\,1 \}$. The scaling parameter $v_{th}$ in
\eqref{c-n} is chosen
so that the numerical scheme captures the asymptotic
behavior of the solution $f_e$ when $\eps\rightarrow 0$. Since the
initial data is far from the Maxwell-Boltzmann equilibrium, the time
step is initially chosen proportional to $\eps$ as $\Delta t =
\eps/500$ whereas we choose $N_x=N_H=32$.

We show first  the numerical results for $\eps=10^{-3}$.  On one hand, we present on Figure \ref{fig:21} $(a)$ the time evolution of the deviations of the discrete mass and total energy, when compared with the initial values. Here, the
errors on mass and total energy are of order $10^{-7}$.  We remind
that our space discretization does not ensure exact conservation of energy but their variations remain very small during the
simulation. On the other hand, we  present the time evolution of the
$L^2$-norm of the first Hermite coefficients $(\alpha_k)_{1\leq k
\leq 6}$ in Figure \ref{fig:21} (b).  For $k\geq 1$, the $L^2$-norm of these coefficients decreases almost exponentially
fast and oscillates. These numerical results illustrate the
efficiency of our approach based on the Hermite decomposition of the
distribution function $f_e$, when treating situations with $\eps
\ll 1$. Indeed, for $\eps\rightarrow 0$ all
  coefficients $(\alpha_k)_{k\geq 1}$ converge to zero very
  rapidly as $e^{-Ct/\eps}$, hence after a short transient regime, the numerical solution can be approximated very well with only few Hermite coefficients, the Maxwell-Boltzmann
distribution corresponding to only one coefficient
$\alpha_0$. Once again, our Hermite decomposition is
particularly well adapted to this asymptotic regime since the number of modes may be adapted
along the simulation by neglecting the Hermite coefficients of smaller
amplitudes \cite{vencels}.  The main issue remains however to develop an efficient
time discretization avoiding the $\eps$-dependent constraint on the time step. \\

\begin{figure}
  \centering
  \begin{tabular}{cc}
	\includegraphics[width=3.2in,clip]{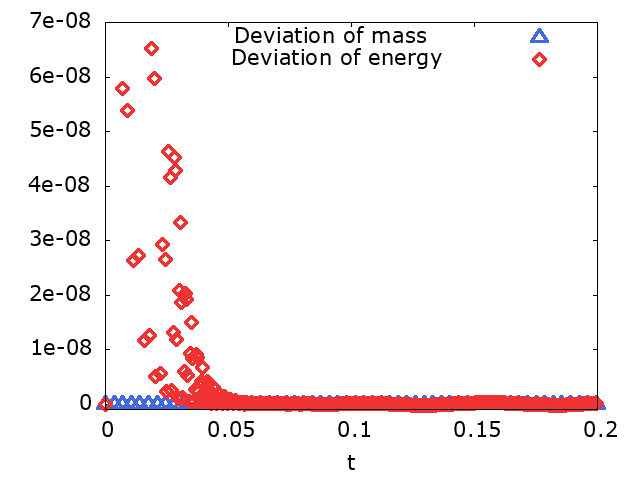} &
	\includegraphics[width=3.2in,clip]{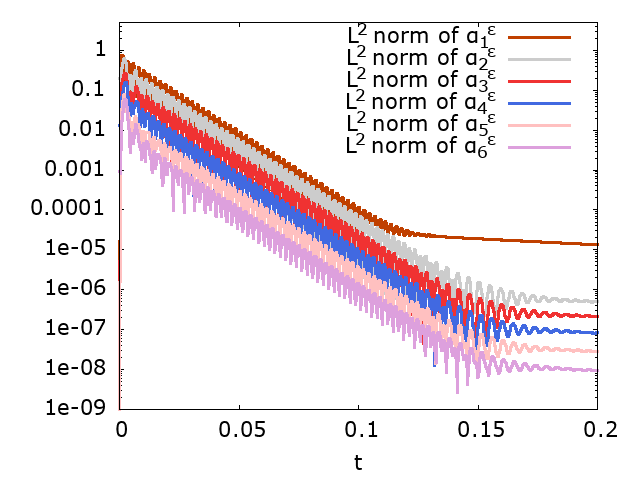}
        \\
        (a) & (b)
 \end{tabular}
\caption{{\bf Two species case ($\eps=10^{-3}$):} $(a)$ deviation of mass and energy with respect to the initial condition; $(b)$ time evolution of the  $L^2$ norm of the Hermite coefficients  $(\alpha_k)_{1\leq k \leq 6}$ in logarithmic scale for  $N_x\times N_H = 32 \times 32$.}
\label{fig:21}
\end{figure}

We also plot the time evolution of the potential
energy  and of the global temperatures (averaged in space) of the electrons and ions in Figure
\ref{fig:22}.  We compare them to the results
obtained  with a refined mesh of size $128\times 128$ and we can see that
these results have the same structure. This means that with coarse grids 
we already get satisfactory results. Furthermore,
  we observe that the potential energy oscillates in time and is damped until it
  reaches a stationary state. The electron
  temperature has a similar behavior whereas the ion temperature grows
  until it converges finally also towards a stationary state.

\begin{figure}
  \centering
  \begin{tabular}{cc}
    	\includegraphics[width=3.2in,clip]{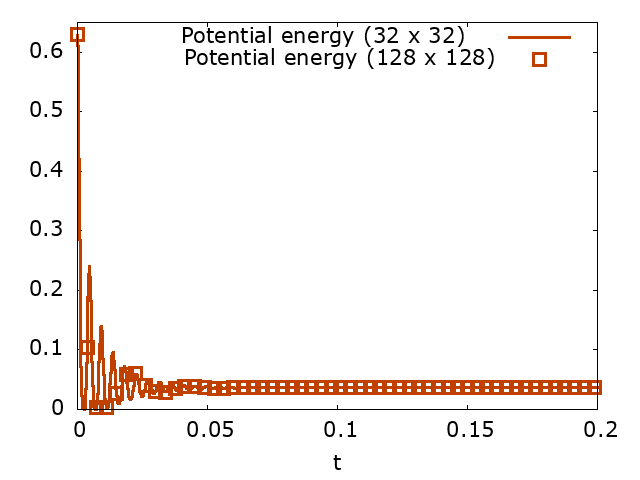}&
	\includegraphics[width=3.2in,clip]{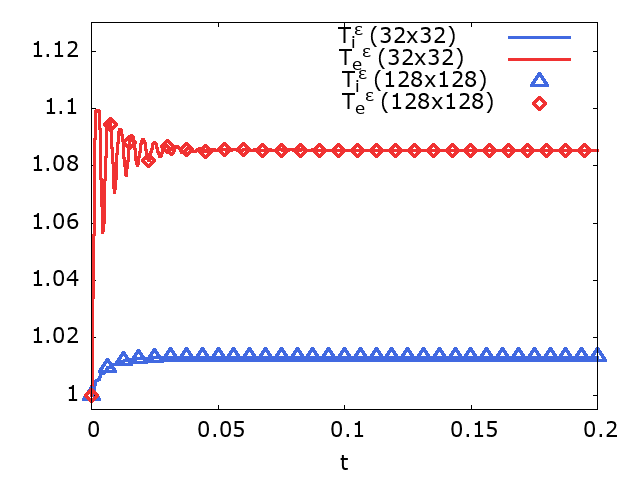}
        \\
    (a) & (b)
 \end{tabular}
	\caption{{\bf Two species case ($\eps=10^{-3}$):} $(a)$ time
          evolution of the potential energy, $(b)$ time evolution of
          the global temperatures $T_e$ and $T_i$ with $N_x\times N_H = 32 \times 32$, whereas the reference solution is with $N_x\times N_H=128\times 128$.}
	\label{fig:22}
\end{figure}

Finally in order to illustrate the convergence to the Maxwell-Boltzmann
distribution, we show  some snapshots of the electron density $n_e$
and of the electric potential $\phi$  in Figure \ref{fig:23}.  Both
quantities converge, after an oscillatory transient region, towards their
equilibrium corresponding to \eqref{def:MB}-\eqref{cstr}.\\

\begin{figure}
 \centering
 \begin{tabular}{cc}
 \includegraphics[width=3.2in,clip]{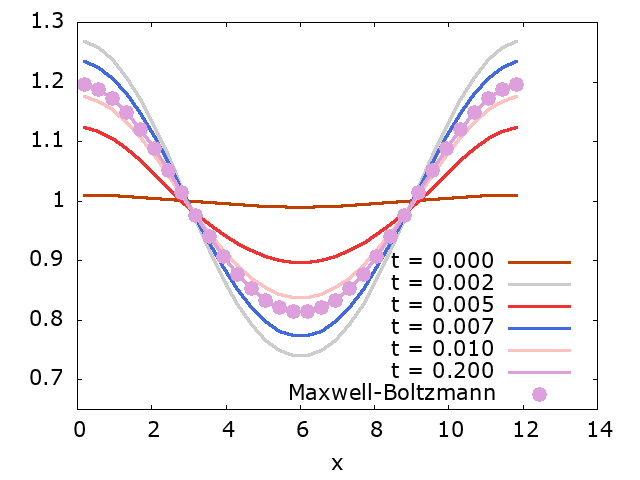} &
 \includegraphics[width=3.2in,clip]{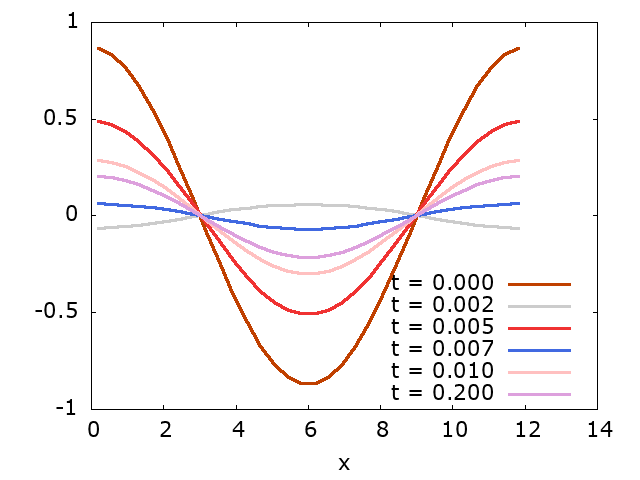}
 \\
 (a) & (b)
\end{tabular}
\caption{{\bf Two species case ($\eps=10^{-3}$):}  plot of the (a) electron density $n_e$ and (b) electric potential $\phi$ at different times and with  $N_x\times N_H = 32 \times 32$.}
\label{fig:23}
\end{figure}

Next we performed some computations for other values of
$\eps>0$. We get similar results concerning the mass and
energy variations. To illustrate the different regimes, we present in Figure \ref{fig:24}, the time evolution
of the electron temperature $T_e$ resp. ion temperature $T_i$ as well as of the Hermite coefficients
$(\alpha_k)_{1\leq k \leq 6}$ for $\eps=10^{-1}$ and
$\eps=1$.  For such large values of $\eps$ we are no more close to the Maxwell-Boltzmann regime. When $t$ becomes larger, the solution $f_e$ of
\eqref{Ele_syst} converges to a steady state, and both temperatures,
after oscillating, tend towards a stationary state.  When
$\eps=1$ (and $\nu_{ee}=0.1$), the electronic temperature strongly oscillates, then it
approaches an equilibrium at $t \simeq 40$. However, for $\eps=0.1$,
oscillations are rapidly damped and the cooling process is much slower
than in the previous case, in particular $T_e$ approaches its equilibrium at
$t\simeq 120$. {This point comes from the fact that the temperature equilibration is   $\varepsilon$-dependent, and for smaller and smaller $\epsilon$
  values, the equlibration of the ion and electron temperatures get
  slower and slower, however, for each $\varepsilon >0$ both temperatures converge towards the same
  value in the
  long-time limit.}
\\

Concerning the Hermite coefficients, they first oscillate with a damping amplitude,
but after a while they stabilize ($\eps=1$) or increase slowly
($\eps=10^{-1}$) . This underlines the fact that even if the solution $f$
converges to an equilibrium when $t$ goes to infinity, this equilibrium does not
coincide with the Maxwell-Boltzmann distribution obtained when
$\eps\rightarrow 0$. The two limits are different in the here presented test case.
Therefore, the solution cannot be represented by
only one coefficient ($\alpha_0$) but higher-order even coefficients are
mandatory during the simulations whereas odd coefficients converge to zero.

\cla{However, we observe that most coefficients decrease rapidly to zero
and the efficiency of the adpative algorithm proposed in Section
\ref{sec:4.3} is illustrated in Figure \ref{fig:25}, where we present
the time evolution of the number of considered Hermite functions for various
values of $\eps$, ranging  from $10^{-3}$ to $1$. As expected, when $\eps$ is
small ($\eps=10^{-3}$ or $10^{-2}$), the solution converges rapidly to
the Maxwell-Boltmzann distribution after a short initial time layer
and $f$ can be approximated with very few Hermite
coefficients ($0\leq k\leq 2$ since we apply the adaptive algorithm
only for $k\geq 3$). For larger values of $\eps$, the solution does not match
with the Maxwell-Boltzmann distribution and the number of Hermite coefficients
oscillates according to the variations of the distribution function,
but after some time only a small number of coefficients are finally used ($0\leq k\leq 5$). This
algorithm allows to reduce drastically the computational time for
various regimes.}


\begin{figure}
  \centering
  \begin{tabular}{cc}
       \includegraphics[width=3.2in,clip]{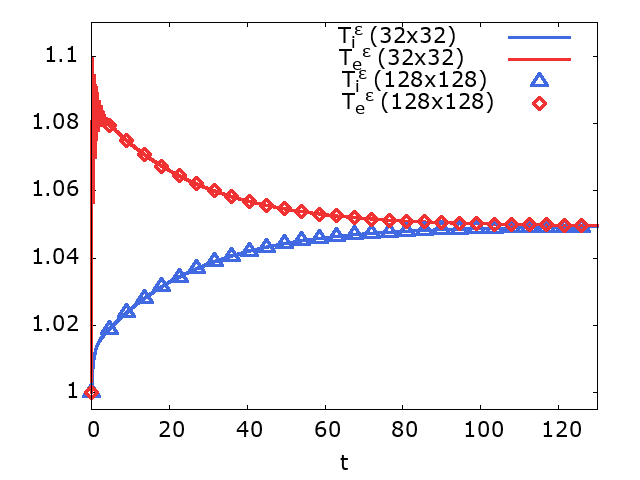}&
       \includegraphics[width=3.2in,clip]{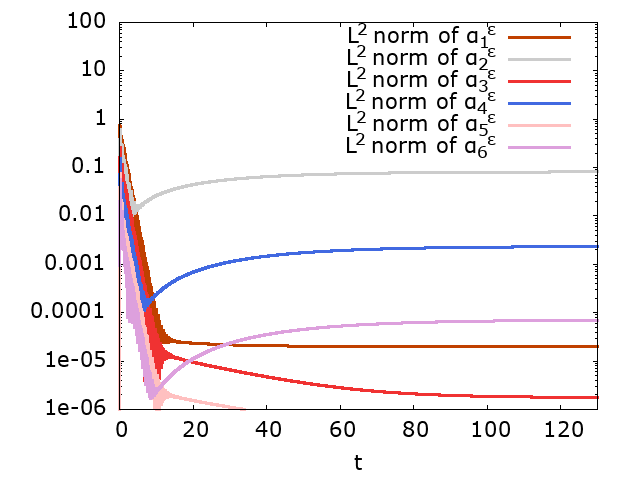}\\
 (a) Temperature ($\eps=0.1$)& (b) $(\alpha_k)_{1\leq k\leq 6}$
                               ($\eps=0.1$) \\
    \includegraphics[width=3.2in,clip]{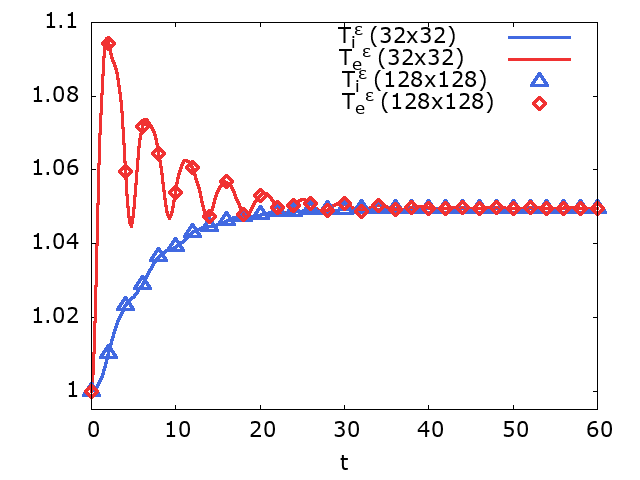}&
       \includegraphics[width=3.2in,clip]{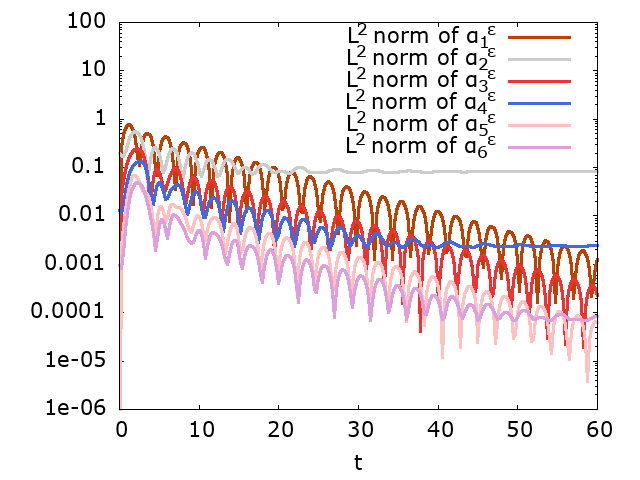} \\
    (c) Temperature ($\eps=1$)& (d) $(\alpha_k)_{1\leq k\leq 6}$  ($\eps=1$)
 \end{tabular}
	\caption{{\bf Two species case $\eps=0.1$ and $\eps=1$:}
         time evolution of  (left) 
          the global temperatures $T_e$ and $T_i$  (right) the  $L^2$ norm of the Hermite coefficients  $(\alpha_k)_{1\leq k \leq 6}$ in logarithmic value.}
	\label{fig:24}
\end{figure}

\begin{figure}
  \centering
  \begin{tabular}{cc}
    \includegraphics[width=3.2in,clip]{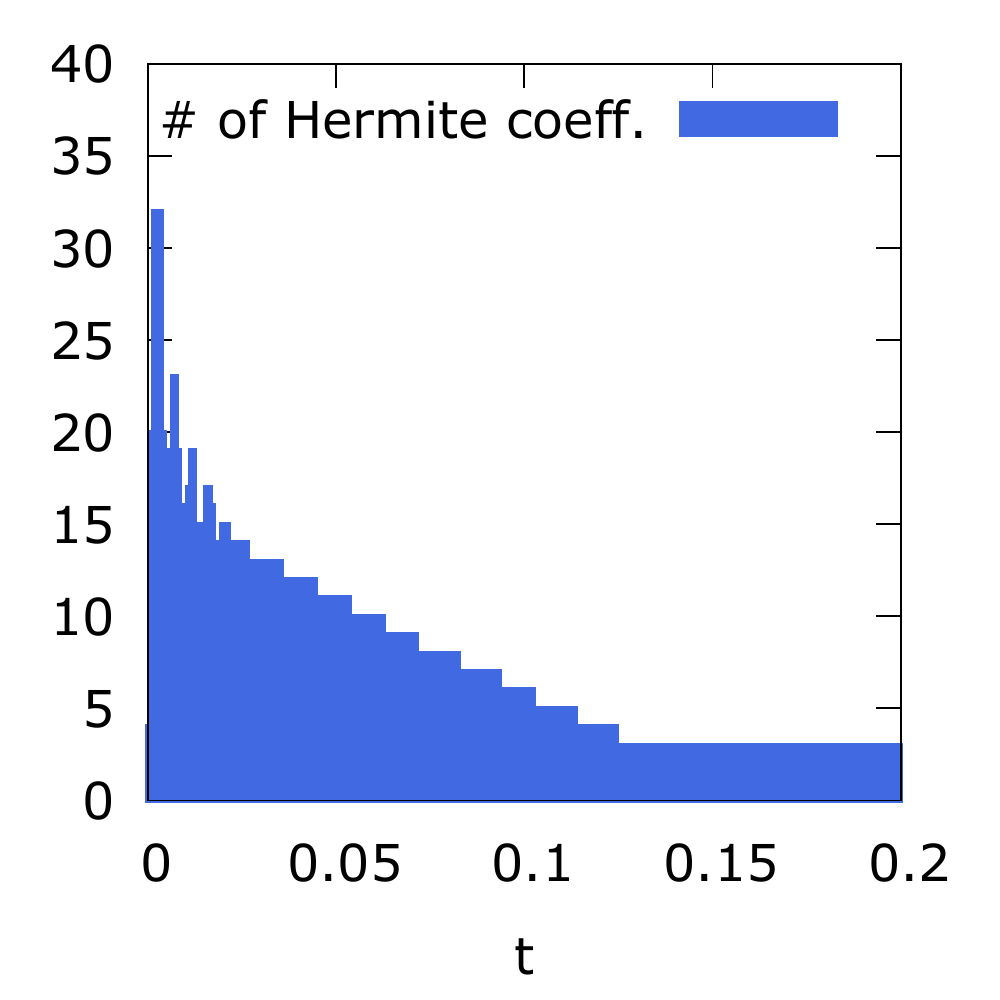}&
    \includegraphics[width=3.2in,clip]{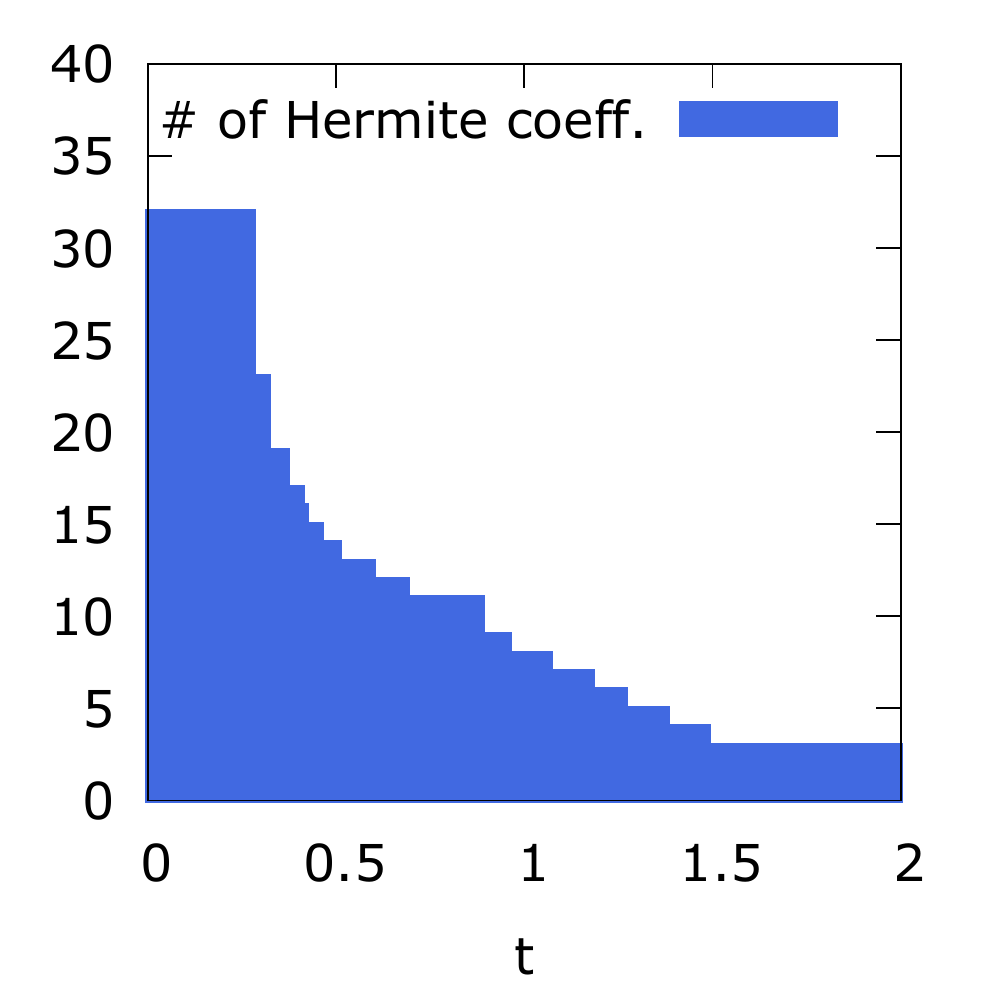}\\
    (a)  $\eps=10^{-3}$ & (b)  $\eps=10^{-2}$\\
    \includegraphics[width=3.2in,clip]{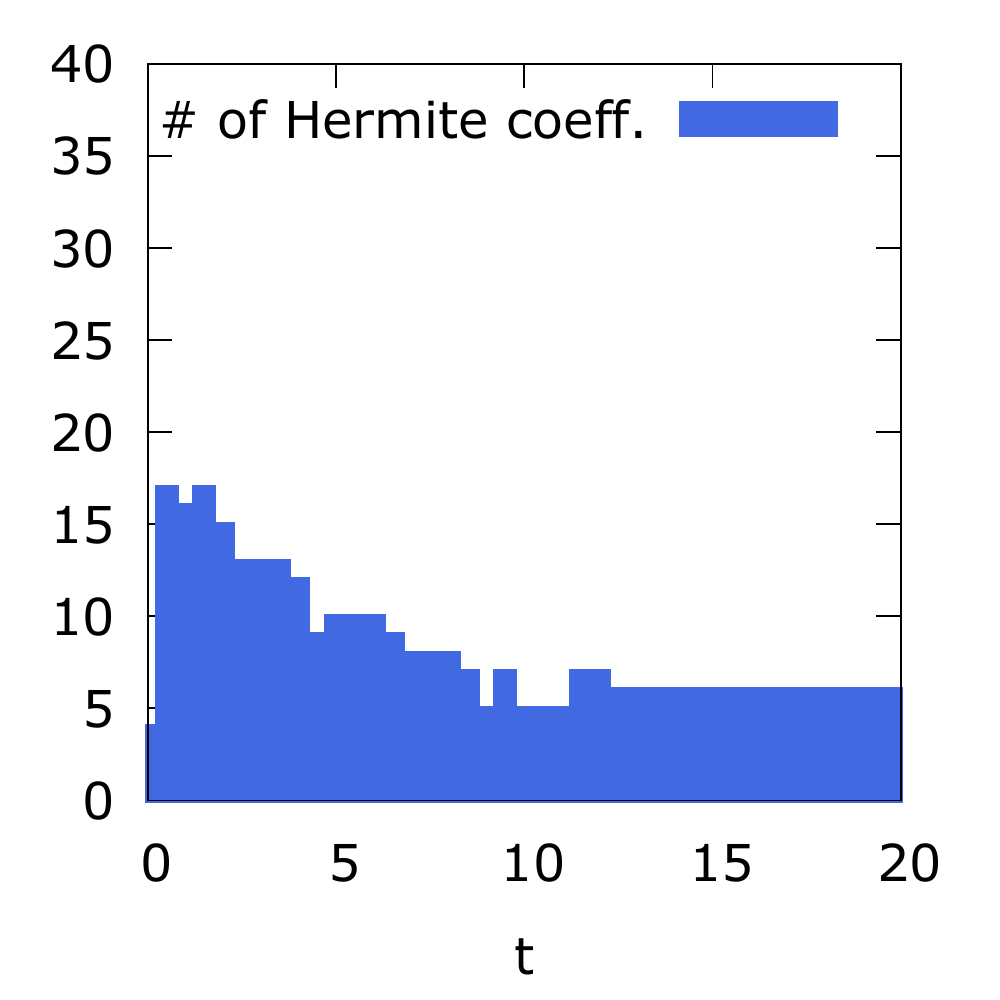}&
    \includegraphics[width=3.2in,clip]{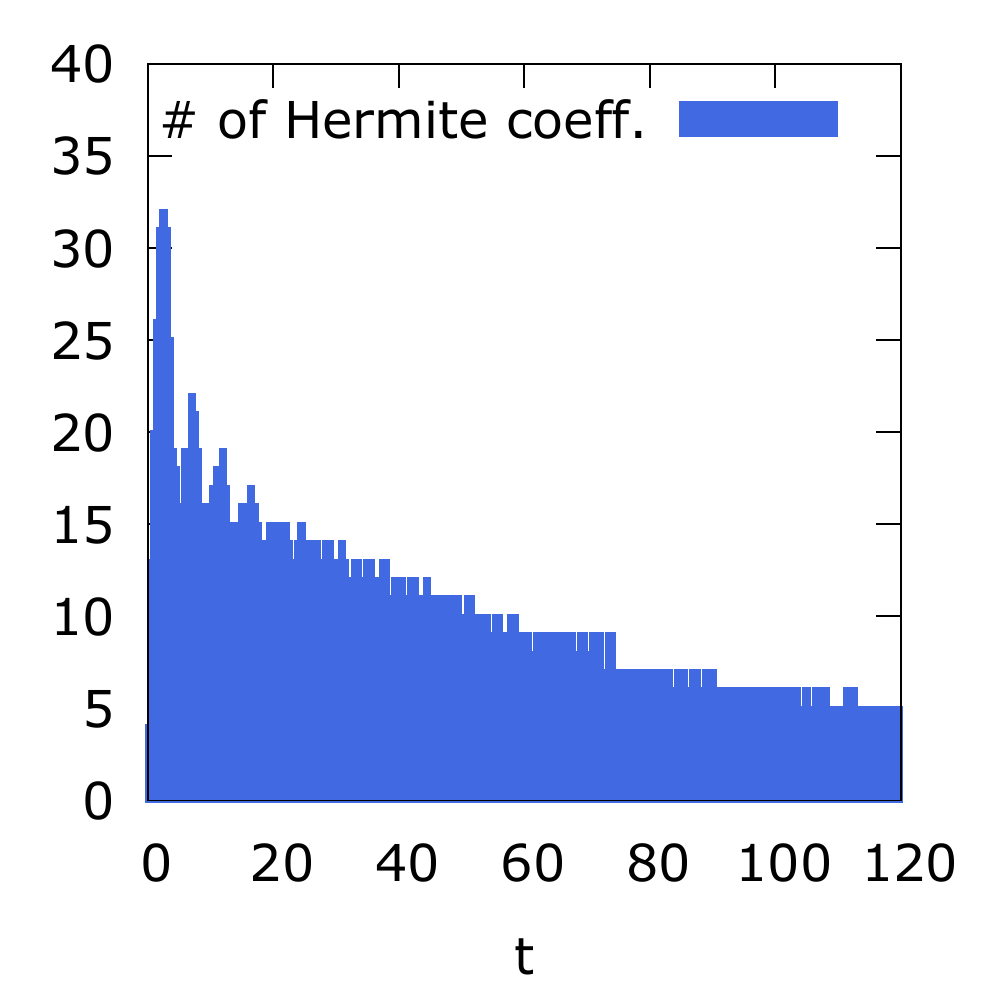}\\
 (c)  $\eps=10^{-1}$ & (d) $\eps=1$.
 \end{tabular}
	\caption{{\bf Two species case :}
         time evolution of the number of considered Hermite coefficients when
         Algorithm \ref{alg:1} is applied for different values of $\eps$.}
	\label{fig:25}
\end{figure}

\section{Concluding remarks and perspectives}
\label{SECC}
\setcounter{equation}{0}
\setcounter{figure}{0}
\setcounter{table}{0}


Let us conclude this paper by summarizing what was achieved in this work and what remains still to be done in future works.\\

The focus of this paper was the introduction of mixed Fokker-Planck collision operators taking into account especially for ion-electron collisions in thermonuclear fusion plasmas, and satisfying the desired physical properties as the three conservation laws and the entropy-decay relation. Based on these new operators, a second aim was to study the adiabatic electron limit $\eps \rightarrow 0$, where  the small parameter $\eps$ stands somehow for the electron-to-ion mass ratio. The small $\eps$-regime corresponds to the description of phenomena occurring at ion scales, whereas the rapid electrons are thermalized and approximated via macroscopic models (adiabatic Boltzmann relation). 

A formal asymptotic limit $\eps \rightarrow 0 $ permitted to obtain
the macroscopic model satisfied by the thermalized electrons. During
this limit the ions remain kinetic. Then a first numerical
scheme was proposed in order to solve the Vlasov-Poisson-Fokker-Planck
system, based on a Hermite spectral method in the velocity variable
and a discontinuous Galerkin method in the space variable. \\ 

One of the main difficulties when trying to solve numerically the Vlasov-Poisson Fokker-Planck system \eqref{eq:kin}-\eqref{constr-phi} in the small $\eps$-regime is the singularity of the problem. The advantage of choosing a Hermite spectral method (to discretize the velocity variable) with a suitable choice of the weights, is that in the small $\eps$-regime only few modes have to be taken into account. This reduces drastically the computational costs. Indeed, an exact Maxwellian, as our limiting adiabatic distribution function, is fully represented by only one mode in the Hermite expansion, if the scaling is well adapted. Thus the transition from the kinetic to the adiabatic model is somehow intrinsic to this Hermite spectral approach.\\

There remain however still several points  to be  treated in future
works, to render the method more efficient.  For example for weakly
collisional plasmas, the dissipation is not sufficiently large and the
time step still depends on $\eps$, only the Hermite-approach permitted
to render the computations more tractable for the electrons. \cla{A {\it
  multi-scale} approach is one of our next aims on the way
to get more performant methods in the study of this two-species adiabatic
limit, in particular to be able to adapt dynamically the time step
(initial layer, equilibrium regime) or to choose $\eps$-independent grids.} Furthermore, a tricky discretization of the Limit-model,
permitting to compute efficiently  the scaling factor $v_{th}(t)$, is
also a complex task to be achieved. And finally, after all these
improvements, a full ion/electron computation shall become possible
and has to be completed in a real physical situation \cite{CF_bis}.

\bigskip

\noindent {\bf Acknowledgments.} 
This work has been carried out within the framework of the EUROfusion
Consortium, funded by the European Union via the Euratom Research and
Training Programme (Grant Agreement No 101052200 — EUROfusion). Views
and opinions expressed are however those of the author(s) only and do
not necessarily reflect those of the European Union or the European
Commission. Neither the European Union nor the European Commission can
be held responsible for them.

\end{document}